\documentclass[12pt]{amsart}
\usepackage[colorlinks=true, pdfstartview=FitV, linkcolor=blue, citecolor=blue, urlcolor=blue]{hyperref}

\usepackage{amssymb,amsmath, amscd}
\usepackage{times, verbatim}
\usepackage{graphicx}
\usepackage[english]{babel}
 \usepackage[usenames, dvipsnames]{color}
\usepackage{amsmath,amssymb,amsfonts}
\usepackage{enumerate}
\usepackage{csquotes}
\usepackage{tabularx}
\usepackage{caption}

\usepackage{tikz}
\thispagestyle{empty}
\usetikzlibrary{calc, arrows, decorations.markings} \tikzset{>=latex}
\usepackage{amsmath,amssymb,amsfonts}
\usepackage{amscd, amssymb, latexsym, amsmath, amscd}
\usepackage[all]{xy}
\usepackage{pb-diagram}
\usepackage{enumerate}
\usepackage{ulem}
\usepackage{anysize}
\marginsize{3cm}{3cm}{3cm}{3cm}
\input xy
\xyoption{all}
\usepackage{pb-diagram}
\usepackage[all]{xy}
\usepackage{array}
\usepackage{upgreek}

\usepackage{xcolor}
\input xy
\xyoption{all}
\usepackage{ytableau}
\usepackage{mathtools}
\usepackage[overload]{empheq}

\theoremstyle{plain}
\newtheorem{theorem}{Theorem}[section]
\newtheorem{lemma}{Lemma}

\newtheorem{corollary}{Corollary}

\newtheorem{proposition}{Proposition}

\theoremstyle{definition} \theoremstyle{definition}
\newtheorem{remark}{Remark}
\newtheorem{example}{Example}

\theoremstyle{remark}

\newcommand{\G}{\textsc{\G}}

\newcommand{\Z}{\mathbb{Z}}

\newcommand{\C}{\mathbb{C}}

\def\G{{\rm G}}

\def\Sp{{\rm Sp}}

\def\GL{{\rm GL}}

\def\SO{{\rm SO}}

\def\Sym{{\rm Sym}}

\newcommand{\sla}{S_{\lambar}}

\newcommand{\lambar}{\underline{\lambda}}
\newcommand{\mubar}{\underline{\mu}}
\newcommand{\nubar}{\underline{\nu}}

\newcommand{\lamr}{{\lambda}_{1} \geq \lambda_{2} \geq \cdots \geq \lambda_{r}}
\newcommand{\mur}{{\mu}_{1} \geq \mu_{2} \geq \cdots \geq \mu_{r}}
\newcommand{\nur}{{\nu}_{1} \geq \nu_{2} \geq \cdots \geq \nu_{r}}

\newcommand{\psilr}{\Psi_{\underline{\lambda}}^{ \hspace{.2mm} r}}
\newcommand{\psimr}{\Psi_{\underline{\mu}}^{\hspace{.2mm} r}}
\newcommand{\psinr}{\Psi_{\underline{\nu}}^{\hspace{.2mm} r}}


\newcommand{\llam}{l({\underline{\lambda}})}
\newcommand{\lmu}{l({\underline{\mu}})}
\newcommand{\lnu}{l({\underline{\nu}})}

\newcommand{\clmn}{C_{{\underline{\lambda}} \hspace{.5mm} {\underline{\mu}}}^{ \hspace{.4mm}{\underline{\nu}}}}

\newcommand{\psilam}{\Psi_{\lambar}}

\newcommand{\pilam}{\Pi_{\lambar}}

\newcommand{\psimu}{\Psi_{\mubar}}

\newcommand{\pimu}{\Pi_{\mubar}}

\newcommand{\pimud}{\Pi_{\mubar^{'}}}
\newcommand{\pinu}{\Pi_{\nubar}}

\newcommand{\psimud}{\Psi_{\mubar^{'}}}
\newcommand{\psinu}{\Psi_{\nubar}}

\newcommand{\symkv}{\Sym^k (V)}
\newcommand{\glv}{\GL{(V)}}

\newcommand{\lambard}{\lambar^{'}}

\newcommand{\psilamd}{\Psi_{\lambard}}

\newcommand{\pilamd}{\Pi_{\lambard}}

\newcommand{\symlor}{\Sym^{\lambda_1}(V) \otimes \cdots \otimes \Sym^{\lambda_r}(V)}

\newcommand{\symlormo}{\Sym^{\lambda_1}(V) \otimes \cdots \otimes \Sym^{\lambda_{r-1}}(V)}

\newcommand{\symmor}{\Sym^{\mu_1}(V) \otimes \cdots \otimes \Sym^{\mu_r}(V)}


\newcommand{\xibar}{\underline{\xi}}

\newcommand{\nlkm}{N_{\lambar,k}^{\mubar}}

\newcommand{\clkn}{C_{\lambar,k}^{\nubar}}

\newcommand{\ndmrn}{N_{\mubar^{'}, \hspace{.2mm} \mu_r}^{\hspace{.4mm} \nubar}}

\newcommand{\cdmrn}{C_{\mubar^{'},\hspace{.2mm} \mu_r}^{\hspace{.4mm} \nubar}}


\newcommand{\lbx}{\lambar /{\xibar}}
\newcommand{\mbx}{\mubar /{\xibar}}




\newcommand{\pilamprodr}{\Pi_{(\lambda_1)} \otimes \Pi_{(\lambda_2)} \otimes \cdots \otimes \Pi_{(\lambda_r)} }

\newcommand{\pilamprodn}{\Pi_{(\lambda_1)} \otimes \Pi_{(\lambda_2)} \otimes \cdots \otimes \Pi_{(\lambda_n)} }

\newcommand{\pilamprodrmo}{\Pi_{(\lambda_1)} \otimes \Pi_{(\lambda_2)} \otimes \cdots \otimes \Pi_{(\lambda_{r-1})} }




\newcommand{\nlmn}{N_{{\underline{\lambda}} \hspace{.5mm} {\underline{\mu}}}^{ \hspace{.4mm}{\underline{\nu}}}}

\newcommand{\nlkn}{N_{{\underline{\lambda},} \hspace{.5mm} {k}}^{ \hspace{.4mm}{\underline{\nu}}}}


\newcommand{\mubard}{\mubar^{'}}
\newcommand{\mubart}{\tilde{\mubar}}

\newcommand{\nubart}{\tilde{\nubar}}
\newcommand{\mtm}{\tilde{\mu}_m}



\newcommand{\nubard}{\nubar^{'}}
\newcommand{\nubardd}{\nubar^{''}}

\newcommand{\cdmmtm}{C_{\mubar^{'}\hspace{.2mm} \mu_m}^{\hspace{.4mm} \mubart}}


\newcommand{\cltmn}{C_{\lambar \hspace{.5mm} \mubart}^{\nubar}}


\newcommand{\cldmdn}{ C_{ \lambar \hspace{.5mm} \mubard }^{ \nubard } }

\newcommand{\nlmdn}{ N_{ \lambar \hspace{.5mm} \mubar }^{ \nubard } }

\newcommand{\nlmno}{ N_{ \lambar \hspace{.5mm} \mubar }^{ \nubar^{(0)} } }

\newcommand{\nlmtn}{ N_{ \lambar \hspace{.5mm} \mubar }^{ \nubart } }

\newcommand{\nlmddn}{ N_{ \lambar \hspace{.5mm} \mubar }^{ \nubardd } }
\newcommand{\cdnmmn}{ C_{ \nubard \hspace{.5mm} \mu_m }^{ \nubar } }

\newcommand{\nldlrl}{ N_{ \lambard, \hspace{.5mm} \lambda_r }^{ \lambar } }

\newcommand{\com}{\Pi^{\infty}_{\mubar}}
\newcommand{\con}{\Pi^{\infty}_{\nubar}}
\newcommand{\col}{\Pi^{\infty}_{\lambar}}

\newcommand{\coltd}{\Pi^{\infty}_{\lamtd}}

\newcommand{\psmm}{\Phi_{\SO{(2m-1)}}}
\newcommand{\psmt}{\Phi_{\SO{(2m-3)}}}

\newcommand{\pspmm}{\Phi_{\Sp{(2m-2)}}}
\newcommand{\pspmt}{\Phi_{\Sp{(2m-4)}}}

\newcommand{\psemm}{\Phi_{\SO{(2m-2)}}}

\newcommand{\csom}{\Pi^{m}_{\mubar}}
\newcommand{\cson}{\Pi^{m}_{\nubar}}
\newcommand{\csol}{\Pi^{m}_{\lambar}}

\newcommand{\csoll}{\Pi^{m-1}_{\lambar}}

\newcommand{\csolt}{\Pi_{\tilde{\lambar}}^{m-1}}

\newcommand{\lamtd}{\tilde{\lambar}}

\newcommand{\lamone}{\lambar^{(1)}}
\newcommand{\lamtwo}{\lambar^{(2)}}

\newcommand{\nuone}{\nubar^{(1)}}

\newcommand{\nlmnone}{ N_{ \lambar \hspace{.5mm} \mubar }^{ \nuone } }

\newcommand{\nlknone}{ N_{ \lambar, \hspace{.5mm} k }^{ \nuone } }

\newcommand{\phign}{\Phi_{G_m}}
\newcommand{\phignone}{\Phi_{G_{m-1}}}

\begin{document}

\title[Stability condition]
{On the stability of tensor product of representations of classical Groups }

\begin{abstract}
 From an irreducible representation of $\GL{(n,\C)}$ there is a natural way to construct an irreducible representations of $\GL{(n+1,\C)}$  by adding a zero at the end of the highest weight $\lambar =( \lambda_1 \geq \lambda_2 \geq \cdots \geq \lambda_n)$ with $\lambda_i \geq 0$ of the irreducible representation of $\GL{(n,\C)}$.
 The paper considers the decomposition of tensor products of irreducible representation of $\GL{(n,\C)}$ and of the corresponding irreducible representations of $\GL{(n+1,\C)}$ and proves a stability result about such tensor products. We go on to discuss similar questions for classical groups. 
  
\end{abstract}

\author{Dibyendu Biswas}

\address{Indian Institute of Technology Bombay, Powai, Mumbai-400076}

\email{dibubis@gmail.com}
\maketitle
    {\hfill \today}
    
\tableofcontents

\section{Introduction}
An important aspect of classical groups is that they lie in the nested families:
\begin{align*}
    \GL{(n,\C)} &\subseteq \GL{(n+1,\C)} \subseteq \GL{(n+2,\C)} \subseteq \cdots \\
    \Sp{(2n,\C)} &\subseteq \Sp{(2n+2,\C)} \subseteq \Sp{(2n+4,\C)} \subseteq \cdots \\
    \SO{(2n+1,\C)} &\subseteq \SO{(2n+3,\C)} \subseteq \SO{(2n+5,\C)} \subseteq \cdots \\
   \SO{(2n,\C)} &\subseteq \SO{(2n+2,\C)} \subseteq \SO{(2n+4,\C)} \subseteq \cdots .
    \end{align*}
Further, in each case, an $n$-tuple of integers  $\lambar =(\lambda_1 \geq \ldots \geq \lambda_n \geq 0)$ gives rise to an irreducible representation of highest weight $\lambar$ of the corresponding group of rank $n$. Adding a zero at the end of $\lambar$, we thus have a natural map from irreducible representations of $\GL{(n,\C)}$ to irreducible representations of $\GL{(n+1,\C)}$, and similarly for other classical groups. In fact, let's write any of the above nested sequence of groups as 
$$G_n \subseteq G_{n+1} \subseteq G_{n+2} \subseteq \cdots  .$$
One can ask how does this natural map from irreducible representations of $G_n$ to irreducible representations of $G_{n+1}$ behave for tensor products. This paper aims to study this question. We find that the decomposition of the tensor product of irreducible representations $\pilam \otimes \pimu$ of $G_r$, when $\pilam$ and $\pimu$  come from $G_n$, is independent of $r$ as soon as $r\geq 2n$, which we may then call \textit{Stable tensor product} of $\pilam$ and $\pimu$, with the understanding that the  stable tensor product of $\pilam$ and $\pimu$ is not a representation of $G_n$ but of $G_r$ for any $r \geq 2n$. Furthermore, we found to our surprise that the stable tensor product $\pilam \otimes \pimu$ of $G_n$ is independent of the classical group ($\Sp{(2n,\C)}$, $\SO{(2n+1,\C)}$, $\SO{(2n,\C)}$) chosen.
In section~\ref{sec before stable} we describe the tensor product of all the classical groups $G_{N-1}$ if $N$ is the stablility level corresponding to that tensor product which we call \enquote{just before stability results}. 

It is possible that many of the theorems about tensor product of irreducible representations of $G_n$ have an analogue for stable tensor product, such as the Littlewood-Richardson rule or saturation conjecture, a theorem due to Knutson-Tao \cite{KT} for $\GL{(n,\C)}$, although at this point we are not sure if the statements simplify or become more involved. 
In section $\ref{sec gln}$ we begin by discussing the case of $\GL{(n,\C)}$. The main tool here and in fact for all the cases studied in this work is Pieri's rule which describes the tensor product $\psilam \otimes \Sym^k{(V)}$ where $\psilam$ is the irreducible representation of $\GL{(n,\C)}=\GL{(V)}$ with highest weight $\lambar$. Pieri's rule also proves as a consequence that every irreducible representation of highest weight of  $\lambar =(\lambda_1 \geq \ldots \geq \lambda_n \geq 0)$ of $\GL{(V)}$ is a sub-representation of $\Sym^{\lambda_1}{(V)} \otimes \Sym^{\lambda_2}{(V)} \otimes \cdots \otimes \Sym^{\lambda_n}{(V)}$, where it appears with multiplicity exactly one, and the other constituents have highest weight which are \enquote{lower} than the highest weight of this. Pieri's rule together with this consequence allows us in section \ref{sec gln} to prove our theorems comparing tensor products for $\GL{(n,\C)}$ and for $\GL{(n+1,\C)}$.

Here is the main theorem of this paper for classical groups 
proving stability theorem for classical groups of the same kind as done earlier for general linear group and at the same time we  also prove that the multiplicities occurring in the tensor product are independent of which classical group we deal with. (For a sequence of integers $\lambar = \left\{ \lamr : \lambda_{i} \geq 0 \right\}$, define $l(\lambar)$, the length of $\underline{\lambda}$, to be the largest integer $s$ such that $\lambda_{s} \neq 0$.)

\begin{theorem}\label{main thm}
Let $G_n$ be one of the groups $\Sp{(2n,\C)}$, $\SO{(2n+1,\C)}$, $\SO{(2n,\C)}$. Let $\lambar=\{ \lamr \geq 0  \}$ and $\mubar=\left\{ \mur \geq 0 \right \}$ be two sequence of integers, and $\pilam^n$ and $\pimu^n$ the corresponding irreducible representations of $G_n$ for $n \geq \text{ max }\{ \llam , \lmu\}$. Write the tensor product of $\pilam^n$ and $\pimu^n$ as,
\begin{equation}
    \pilam^n \otimes \pimu^n = \sum_{\nubar} \nlmn \left (n \right) \pinu^n .
\end{equation}
Suppose $n_0=\llam + \lmu$. Then,
\begin{enumerate}
    \item $\nlmn \left (n \right)=0$ \quad if \quad $\lnu \geq n_{0}+1$,
    \vspace{.15cm}
    \item $\nlmn \left (n \right)=\nlmn \left (n+1 \right)$ \hspace{.5mm} for \hspace{.5mm} $n \geq$ $\begin{cases}  n_0 &\text{ if } G_n = \Sp{(2n,\C)}, \SO{(2n+1,\C)}; \\ n_0 +1
    &\text{ if } G_n = \SO{(2n,\C)},
    \end{cases}$ 
    \vspace{.15cm}
    \item $\nlmn \left (n \right)$ is independent of the group $G_n=\Sp{(2n,\C)}, \SO{(2n+1,\C)}$ if $n \geq n_{0}$,
     \vspace{.15cm}
    \item $\nlmn \left (n \right)$ is independent of the group $G_n$ if $n \geq n_{0}+1.$
\end{enumerate}
\end{theorem}


\section{General Linear Group} \label{sec gln}
Any sequence of integers $\lambar=\{ \lamr  \}$  defines an irreducible representation $\psilr$ of $\GL{(r,\C)}$ for all $r\geq s= \llam$ with highest weight $\lambar=\left\{ \lamr \right \}$.

Let $\lambar$ and $\mubar$ be two sequence of integers, and $r\geq 1$, an integer such that $r \geq \llam$ and $r \geq \lmu$ then it makes sense to talk about the tensor product :
$\psilr \otimes \psimr$ of the representation of $\GL{(r,\C)}$ as $r$ varies.

Our first theorem proves that the representation $\psilr \otimes \psimr$ are \enquote{independent} of $r$ if $r \geq \llam + \lmu$. Here is a more precise statement.

\begin{theorem}\label{thm gl}
Let $\lambar=\left\{ \lamr \geq 0 \right \}$, $\mubar=\left\{ \mur \geq 0 \right \}$ be two sequence of integers with $r \geq \text{ max }\{ \llam , \lmu\}$. Write 
\begin{equation*}
    \psilr \otimes \psimr = \sum_{\nubar} \clmn \left(r\right) \psinr \quad , \quad \clmn \left(r\right) \in \Z_{\geq 0}.
\end{equation*}
Then if $r \geq \llam + \lmu$,
 $$\clmn \left(r\right) = \clmn \left(r+1\right) $$
 \vspace{1mm}
 for all sequence of integers $\nubar =\left \{  \nur \right\}$.
\end{theorem}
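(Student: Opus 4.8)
The plan is to prove the stability statement for $\GL$ by exploiting Pieri's rule together with the consequence stated in the introduction: that $\psilr$ embeds with multiplicity one as the \enquote{top} constituent of the iterated symmetric-power product $\symlor$, with all other constituents having strictly lower highest weight. The guiding idea is that the Littlewood--Richardson-type coefficients $\clmn(r)$ counting multiplicities in $\psilr \otimes \psimr$ should be expressible purely in terms of combinatorial data attached to the partitions $\lambar$, $\mubar$, $\nubar$, and that this data is insensitive to the ambient rank $r$ once $r$ is large enough to \enquote{hold} the partition $\nubar$ — specifically once $r \geq \llam + \lmu$, since any $\nubar$ contributing to the product satisfies $\lnu \leq \llam + \lmu$.

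First I would establish the support statement: if $\clmn(r) \neq 0$, then $\lnu \leq \llam + \lmu$. This follows from highest-weight considerations, since the highest weight of any constituent of $\psilr \otimes \psimr$ is bounded by the sum $\lambar + \mubar$, whose length is at most $\llam + \lmu$. This reduces the problem to those $\nubar$ with $\lnu \leq \llam + \lmu \leq r$, so that $\nubar$, $\lambar$, $\mubar$ all fit as honest highest weights for $\GL(r,\C)$ and for $\GL(r+1,\C)$ simultaneously. Next I would set up the comparison between rank $r$ and rank $r+1$: the representation $\psi_{\lambar}^{r+1}$ is obtained from $\psilr$ by appending a zero, and I want to compare $\clmn(r)$ with $\clmn(r+1)$ directly. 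The natural mechanism is to realize both multiplicities through the same symmetric-power resolution and to run an induction on $\lmu$ (or on the number of symmetric-power tensor factors), using Pieri's rule to peel off one factor $\Sym^{\mu_r}(V)$ at a time.

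The key computational step is therefore a Pieri-based recursion. Using Pieri's rule, $\psilr \otimes \Sym^{k}(V)$ decomposes as a sum of $\psi_{\nubar}^r$ over those $\nubar$ obtained from $\lambar$ by adding a horizontal strip of size $k$, each with multiplicity one, subject to the constraint that $\nubar$ has length at most $r$. The crucial observation is that the horizontal-strip condition is a condition on the \emph{shape} and is independent of $r$, while the only place $r$ enters is through the length constraint $\lnu \leq r$. Since we have arranged $\lnu \leq \llam + \lmu \leq r < r+1$, this constraint is satisfied identically in both ranks, so the Pieri coefficients at rank $r$ and rank $r+1$ agree term by term. By writing $\psimr$ inside $\symmor$ as the top piece and inducting, the multiplicity $\clmn(r)$ is assembled from Pieri coefficients and from lower-order multiplicities $C$ of the same type with strictly smaller data, and by the inductive hypothesis each of these stabilizes. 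I would make the induction clean by inducting on $|\mubar| = \sum_i \mu_i$, peeling $\Sym^{\mu_r}(V)$ off the product $\symmor$ and using the \enquote{top constituent plus lower terms} structure to express $\clmn$ in terms of strictly lower cases.

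The main obstacle I anticipate is bookkeeping the length constraint carefully at every stage of the recursion: when Pieri's rule is applied at rank $r$ versus rank $r+1$, one must ensure that no intermediate partition produced during the induction secretly exceeds length $r$ at rank $r$ while being allowed at rank $r+1$ — this is exactly the phenomenon that would break stability if $r$ were too small, and it is precisely the hypothesis $r \geq \llam + \lmu$ that rules it out. Controlling this requires verifying that all partitions arising in the resolution of $\psimr$, and all those arising after tensoring with $\psilr$ and applying Pieri, have length bounded by $\llam + \lmu$; the subtraction step (inverting the \enquote{top constituent plus lower terms} decomposition to isolate $\clmn$) must be shown to involve only such bounded-length partitions. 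Once this uniform length bound is in place, the agreement $\clmn(r) = \clmn(r+1)$ falls out because every ingredient — the Pieri coefficients and, inductively, the lower multiplicities — is rank-independent.
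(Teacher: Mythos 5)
Your overall strategy works, but one step as you justify it is wrong, and the route you take for the stability step differs from the paper's. The flawed step is your justification of the support statement: you argue that $\clmn(r) \neq 0$ forces $\lnu \leq \llam + \lmu$ because every constituent's highest weight is dominated by $\lambar + \mubar$, \enquote{whose length is at most $\llam + \lmu$}. Domination does not bound length from above: for $\lambar = \mubar = (1,0)$ in $\GL(2,\C)$ the constituent $\nubar = (1,1)$ is dominated by $\lambar + \mubar = (2,0)$ yet is strictly longer, and in general $(1,1,\ldots,1)$ is dominated by $(k,0,\ldots,0)$. Dominance controls $\nu_1$ (indeed $\nu_1 \leq \lambda_1 + \mu_1$), not $\lnu$. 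The correct argument is the one you only gesture at later when discussing length control: by Lemma~\ref{vl g sub sym}, $\psimr \subseteq \symmor$, hence $\psilr \otimes \psimr \subseteq \psilr \otimes \symmor$, and $\lmu$ applications of Pieri's rule, each of which increases the length by at most one, give $\lnu \leq \llam + \lmu$. That is exactly the paper's proof of the support statement, and it should replace the dominance argument.

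For the remaining step (that the support bound implies $\clmn(r) = \clmn(r+1)$) the paper does something different from you: it applies the specialization homomorphism $p_n$ (sending $X_{n+1}$ to $0$) to the character identity, using that $p_n$ kills Schur functions of length $n+1$ and fixes those of length at most $n$ (Theorem~\ref{thm all}); once the length bound is known this finishes immediately. Your Pieri recursion --- peel off one symmetric power, identify the top constituent, subtract the lower terms --- is the alternative the paper explicitly flags in the remark after Theorem~\ref{thm gl} and is the argument it actually carries out for the classical groups in Section~\ref{sec clas grp}; it is longer, but it transfers to types $B$, $C$, $D$, where no such specialization of the character ring is available, whereas the paper's finish is special to $\GL$. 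One correction to your bookkeeping: induction on $|\mubar|$ does not terminate, since the lower terms $\mubart$ appearing in $\psimud \otimes \Sym^{\mu_m}(V)$ (with $\mu_m$ the last nonzero part) satisfy $|\mubart| = |\mubar|$. You need a finer well-ordering, for instance first by $\lmu$ and then by the last nonzero part, under which every lower term produced by Pieri's rule is strictly smaller than $\mubar$.
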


Proof of this theorem will be a  consequence of Pieri's Rule.

\begin{lemma}{(Pieri's Rule)}\label{Pieri GL}
Let $V$ be an $r$-dimensional $\C$-vector space, $\lambar=\{ \lambda_1 \geq \lambda_2 \geq  \cdots \geq \lambda_r \geq 0  \}$ and $\mubar=\left\{ \mur \geq 0 \right \}$ be two sequence of integers. Let $\psilr$ be the irreducible highest weight module of $\GL{(r,\C)}$. Then 
$$\psilr \otimes \symkv = \bigoplus_{\mubar} \psimr , $$
each irreducible representation $\psimu$ appearing with multiplicity 1, and exactly those $\mubar$ appear which are obtained for $\lambar$ by adding $k$-boxes to the Young diagram of $\lambar$ such that no two boxes lie in any column.
\end{lemma}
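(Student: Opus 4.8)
The plan is to pass to characters. Writing $V\cong\C^r$ with the standard maximal torus acting through coordinates $x_1,\dots,x_r$, the character of $\psilr$ is the Schur polynomial $s_{\lambar}(x_1,\dots,x_r)$, while $\symkv$ has a weight basis of monomials $e_1^{c_1}\cdots e_r^{c_r}$ (with $e_1,\dots,e_r$ a basis of $V$, $c_i\ge 0$, $\sum_i c_i=k$), so its character is the complete homogeneous symmetric polynomial $h_k(x_1,\dots,x_r)=\sum_{c\ge 0,\,|c|=k}x^{c}$, where $x^{\alpha}:=x_1^{\alpha_1}\cdots x_r^{\alpha_r}$. Since the $s_{\mubar}$, over partitions $\mubar$ with at most $r$ parts, are linearly independent, the lemma is equivalent to the symmetric-function identity $s_{\lambar}\,h_k=\sum_{\mubar}s_{\mubar}$, the sum running over those $\mubar$ obtained from $\lambar$ by adding a horizontal $k$-strip, each with coefficient one.

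To prove this identity I would use the bialternant form of the Weyl character formula: with $\delta=(r-1,\dots,1,0)$ and $a_{\beta}:=\det\big(x_i^{\beta_j}\big)=\sum_{w\in S_r}\mathrm{sgn}(w)\,x^{w\beta}$ one has $s_{\lambar}=a_{\lambar+\delta}/a_{\delta}$. Because $h_k$ is symmetric while $a_{\lambar+\delta}$ is alternating, one may reindex the Weyl-group sum (replacing $c$ by $wc$ inside the $w$-summand) to collapse the product into a single alternant sum:
\begin{equation*}
a_{\lambar+\delta}\cdot h_k=\sum_{\substack{c\ge 0\\ |c|=k}}a_{\lambar+c+\delta},
\qquad\text{hence}\qquad
s_{\lambar}\,h_k=\sum_{\substack{c\ge 0\\ |c|=k}}\frac{a_{\lambar+c+\delta}}{a_{\delta}}.
\end{equation*}
Each summand vanishes when $\lambar+c+\delta$ has a repeated entry, and otherwise equals $\mathrm{sgn}(w)\,s_{\mubar}$, where $\mubar+\delta$ is the strictly decreasing rearrangement of $\lambar+c+\delta$.

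The heart of the argument, and the step I expect to be the main obstacle, is the cancellation: I must show that once these signed terms are collected, the coefficient of $s_{\mubar}$ is $1$ precisely when $\mubar/\lambar$ is a horizontal strip and $0$ otherwise. I would organize this as a sign-reversing involution on the \enquote{bad} terms --- those whose index $\lambar+c+\delta$ requires a nontrivial rearrangement, together with the already-dominant $\mubar=\lambar+c$ (so $\mu_i=\lambda_i+c_i$) that violate the interlacing $\lambda_{i-1}\ge\mu_i\ge\lambda_i$ --- pairing each with a term of opposite sign so that they cancel. The survivors are exactly the dominant $\mubar=\lambar+c$ for which $c\ge 0$ forces $\mu_i\ge\lambda_i$ and which additionally satisfy $\mu_i\le\lambda_{i-1}$; this two-sided interlacing is precisely the condition that the added boxes form a horizontal strip, i.e.\ that no two of them lie in a common column, and each such $\mubar$ is attained exactly once, yielding multiplicity one.

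Finally I would check the boundary behaviour: a composition $c$ that pushes some part beyond the allowed range produces an index $\lambar+c+\delta$ with a repeated or out-of-range entry and hence drops out, consistent with the fact that partitions with more than $r$ parts index the zero representation of $\GL(r,\C)$. The same identity can instead be read off as the coefficient of $t^k$ in $a_{\lambar+\delta}\cdot\prod_i(1-x_it)^{-1}=\sum_k a_{\lambar+\delta}\,h_k\,t^k$, but this generating-function route still requires the same cancellation analysis and so offers no genuine shortcut past the main obstacle.
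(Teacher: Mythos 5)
First, a remark on the comparison you asked for: the paper offers no proof of this lemma at all. Pieri's rule is quoted as a classical fact (it is in Fulton--Harris, the paper's reference [FH]) and is used as the input to Theorem~\ref{thm gl}, so there is no argument in the source to measure yours against; I can only judge the proposal on its own terms.

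Your reduction is correct as far as it goes: passing to characters, identifying the character of $\symkv$ with $h_k$, and reindexing the alternating sum to get $s_{\lambar}h_k=\sum_{c\ge 0,\,|c|=k}a_{\lambar+c+\delta}/a_{\delta}$ are all sound, as is the observation that each summand is $0$ or $\pm s_{\mubar}$. But the proof stops exactly where it becomes nontrivial: the sign-reversing involution that is supposed to cancel the bad terms is announced, not constructed, and that cancellation is the entire content of Pieri's rule. It is not a routine omission. Already for $r=2$, $\lambar=(0,0)$, $k=2$, the three compositions $c=(2,0),(1,1),(0,2)$ contribute $s_{(2,0)}+s_{(1,1)}-s_{(1,1)}$: the non-horizontal-strip partition $(1,1)$ receives a genuine $+1$ from its dominant term $c=\mubar-\lambar$, which must be killed by a $-1$ coming from a rearranged index, and one must prove this matching is a perfect, sign-reversing pairing in general. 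The obvious candidate (swap the two offending entries of $\lambar+\delta+c$ at the first index where $m_i<m_{i+1}$) does yield a valid nonnegative composition of the same size with opposite sign, but it is not immediate that applying it twice returns you to the starting term, since the location of the first violation can move; verifying this, or replacing it with a correct involution, is the missing step. I would also push back on your closing sentence: the generating-function route is not \enquote{the same cancellation in disguise.} By multilinearity in columns, $\sum_{c\ge 0}a_{\lambar+\delta+c}t^{|c|}=\det\bigl(x_i^{\lambda_j+r-j}/(1-x_it)\bigr)$, while the target $\sum_{\mubar}a_{\mubar+\delta}t^{|\mubar|-|\lambar|}$, summed over horizontal strips, is likewise a determinant of finite geometric sums because the interlacing constraints $\lambda_j\le\mu_j\le\lambda_{j-1}$ are independent column by column; telescoping column operations then identify the two with no involution at all. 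Either complete the involution or switch to that determinant computation --- as written, the proposal is a correct strategy with its central step absent.
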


\begin{remark}
For general linear group, $\Sym^k{(V)}$ is an irreducible representation of $\GL{(V)}$, and   $\Sym^k{(V)} = \Psi_{(k)}^r $ for all $r\geq \text{ dim}(V)$.
\end{remark}

\begin{corollary} If \hspace{.2mm}
$\psilr \otimes \symkv = \bigoplus_{\mubar} \psimr $, then $\lmu \leq \llam +1$.
\end{corollary}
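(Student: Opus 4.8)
The plan is to read off the conclusion directly from the combinatorial description of the summands furnished by Pieri's rule (Lemma \ref{Pieri GL}). By that lemma, every $\mubar$ occurring in $\psilr \otimes \symkv$ is obtained from $\lambar$ by adding $k$ boxes to the Young diagram of $\lambar$, subject to the constraint that no two of the added boxes lie in the same column. So the entire task reduces to bounding the number of nonzero rows of such a $\mubar$ in terms of $s := \llam$.

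First I would record that $\lambar$ has exactly $s$ nonzero parts, so $\lambda_{s+1} = \lambda_{s+2} = \cdots = 0$; equivalently, rows $s+1, s+2, \ldots$ of the Young diagram of $\lambar$ are empty. The claim $\lmu \leq s+1$ is then precisely the assertion that no admissible box-addition can create a nonempty row $s+2$.

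The key step, and really the whole content, is the following observation. Suppose toward a contradiction that $\mu_{s+2} \geq 1$. Since $\mubar$ is a partition we then have $\mu_{s+1} \geq \mu_{s+2} \geq 1$ as well. Because rows $s+1$ and $s+2$ of $\lambar$ are empty, every box of $\mubar$ lying in these two rows is a box that was added; in particular both row $s+1$ and row $s+2$ receive an added box in column $1$. These are two added boxes in the same column, contradicting the horizontal-strip constraint of Pieri's rule. Hence $\mu_{s+2} = 0$, i.e. $\lmu \leq s+1 = \llam + 1$.

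I do not expect any genuine obstacle: the statement is a direct corollary, and the only point requiring care is phrasing the horizontal-strip condition correctly (no two added boxes share a column, equivalently the interleaving $\lambda_i \geq \mu_{i+1}$ for all $i$) and then applying it at the index $i = s+1$, where $\lambda_{s+1} = 0$. If one prefers the inequality form over the contradiction above, the same conclusion follows in one line from $\mu_{s+2} \leq \lambda_{s+1} = 0$.
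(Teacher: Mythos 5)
Your argument is correct and is essentially the same as the paper's: the paper states this corollary as an immediate consequence of Pieri's rule, and its proof of the analogous classical-group statement (Corollary \ref{lemma length}) uses exactly your interleaving observation $\mu_{s+2} \leq \lambda_{s+1} = 0$ where $s = \llam$. Nothing is missing.
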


This Corollary will not be needed for this section, but we will have occasion to use it later.

\begin{lemma}\label{vl g sub sym}
Let $\psilam$, $\lambar=\left\{ \lamr \right \}$ be the irreducible representation of $\glv$ with highest weight $\lambar$, then 
$$ \psilam \subseteq \symlor .$$
\end{lemma}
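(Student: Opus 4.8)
The plan is to build up the highest weight $\lambar$ one row at a time, applying Pieri's rule (Lemma~\ref{Pieri GL}) to the tensor factors $\Sym^{\lambda_i}(V)$ in turn. For $1 \le k \le r$ write $\lambar^{(k)} = (\lambda_1 \ge \cdots \ge \lambda_k)$ for the truncation of $\lambar$ to its first $k$ parts, and set $W_k = \Sym^{\lambda_1}(V) \otimes \cdots \otimes \Sym^{\lambda_k}(V)$, so that $W_r = \symlor$. I would prove by induction on $k$ that $\Psi_{\lambar^{(k)}}$ occurs inside $W_k$; taking $k = r$ then yields the lemma.

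The base case $k = 1$ is immediate: by the Remark following Lemma~\ref{Pieri GL} we have $W_1 = \Sym^{\lambda_1}(V) = \Psi_{(\lambda_1)}^r = \Psi_{\lambar^{(1)}}$. For the inductive step, suppose $\Psi_{\lambar^{(k-1)}} \subseteq W_{k-1}$. Tensoring with $\Sym^{\lambda_k}(V)$ gives
$$\Psi_{\lambar^{(k-1)}} \otimes \Sym^{\lambda_k}(V) \subseteq W_{k-1} \otimes \Sym^{\lambda_k}(V) = W_k,$$
and by Pieri's rule the left-hand side is the sum of all $\Psi_{\mubar}$ obtained from $\lambar^{(k-1)}$ by adding $\lambda_k$ boxes, no two in the same column. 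It therefore suffices to realize $\lambar^{(k)}$ as one of these $\mubar$.

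This is the one point that needs checking, and it is where the hypothesis that $\lambar$ is weakly decreasing enters. The skew shape $\lambar^{(k)} / \lambar^{(k-1)}$ is exactly the new $k$-th row, consisting of $\lambda_k$ boxes sitting in columns $1, \ldots, \lambda_k$. These boxes occupy pairwise distinct columns, so they form an admissible horizontal strip for Pieri's rule, and $\lambar^{(k)}$ is a genuine partition precisely because $\lambda_{k-1} \ge \lambda_k$. Hence $\lambar^{(k)}$ appears in the Pieri expansion and $\Psi_{\lambar^{(k)}} \subseteq W_k$, completing the induction.

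I do not expect a genuine obstacle here: the argument is a clean iteration of Pieri's rule, and the only thing to verify is that one may add all $\lambda_k$ new boxes as a single bottom row. The same bookkeeping in fact shows that $\Psi_{\lambar^{(k)}}$ occurs with multiplicity one, since the bottom row is the unique horizontal strip of size $\lambda_k$ whose addition to $\lambar^{(k-1)}$ produces $\lambar^{(k)}$ --- this recovers the multiplicity-one assertion mentioned in the Introduction.
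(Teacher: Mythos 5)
Your proof is correct and follows essentially the same route as the paper: induction on the number of parts of $\lambar$, tensoring with $\Sym^{\lambda_k}(V)$ at each step and using Pieri's rule to add the new bottom row as a horizontal strip. Your explicit verification that $\lambar^{(k)}/\lambar^{(k-1)}$ is an admissible horizontal strip (and the multiplicity-one remark) makes explicit a point the paper leaves implicit, but the argument is the same.
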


\noindent{\bf Proof.}
The proof of this lemma will be by an induction on $r$ using Pieri's Rule. Thus we assume that the lemma is true if $\llam \leq r-1$ and then we prove it for $\lambar$ with $\llam =r$.

Let $\lambard=\left \{\lambda_1 \geq \cdots \geq \lambda_{r-1}  \right \}$. By induction hypothesis $$\psilamd \subseteq \symlormo .$$
Therefore
$$\psilamd \otimes \Sym^{\lambda_r} (V) \subseteq \symlor. $$
By Pieri's Rule $\psilam \subseteq \psilamd \otimes \Sym^{\lambda_r} (V)$, hence 
$$\psilam \subseteq \symlor. \qed$$

\noindent{\bf Proof.} {(of Theorem~\ref{thm gl})}
It suffices to prove that in the decomposition,
 $$  \psilr \otimes \psimr = \sum \clmn \left(r\right) \psinr ,$$
 if $\clmn \left(r\right) \neq 0$, then $\lnu \leq \llam + \lmu$.
 By Lemma \ref{vl g sub sym},
 $$\psimr \subseteq \symmor.$$ Therefore $$\psilr \otimes \psimr \subseteq \psilr \otimes \symmor.$$ 
 Therefore by Pieri's Rule, if 
 $$\psinr \subseteq \psilr \otimes \symmor, $$ $\lnu \leq \llam + \lmu$. By Theorem~\ref{thm all} proved in the next sections, it follows that 
  $$\clmn \left(r\right) = \clmn \left(r+1\right) . \qed$$

\begin{remark}
The very last step in the proof of Theorem~\ref{thm gl} can be done by an inductive argument as we will do for the classical groups. But we have given another proof of the last step which may be of independent interest.
\end{remark}

\section{Relating tensor products for \GL{(n)} and \GL{(n+1)}}
For $\lambar = (\lambda_1 \geq  \ldots \geq \lambda_n \geq 0)$ with corresponding irreducible highest weight representation $\psilam$ of $\GL{(n,\C)}$, it character, the Schur function $$\sla \in \Z[X_1, \ldots, X_n]^{\mathfrak{S}_n}$$
and is the character of the representation $\psilam$ at the diagonal matrix
$$
 \begin{pmatrix}
   X_{1} &  &  & \\ 
     &  \ddots & \\ 
     &   & X_{n} 
 \end{pmatrix} .
$$

Under the natural homomorphism of algebras,
\begin{align*}
    p_n: \Z[X_1, \ldots, X_{n+1}]^{\mathfrak{S}_{n+1}} &\longrightarrow \Z[X_1, \ldots, X_n]^{\mathfrak{S}_n},\\
    X_i \hspace{2.5mm} &\longrightarrow X_i, \quad i \leq n, \\
    X_{n+1} &\longrightarrow 0,
\end{align*}
it follows from the Weyl character formula or more directly from the corresponding determinantal formula that 
\begin{align*}
    p_n(\sla)= \begin{cases}
    0 &\text{ if } \lambar = (\lambda_1 \geq  \ldots \geq \lambda_{n+1} \geq 0), \lambda_{n+1}\neq 0, \vspace{1.2mm}\\ \sla & \text{ if } \lambar = (\lambda_1 \geq  \ldots \geq \lambda_{n+1} \geq 0),   \lambda_{n+1}= 0.
    \end{cases}
\end{align*}

\begin{theorem}\label{thm all}
Let $G_n=\GL{(n,\C)}$.
For $\lambar = (\lambda_1 \geq  \ldots \geq \lambda_{n+1} \geq 0)$, $\mubar = (\mu_1 \geq  \ldots \geq \mu_{n+1} \geq 0)$, let $\psilam$, $\psimu$ be the corresponding irreducible highest weight representations of $G_{n+1}$. Suppose $\llam \leq n$, $\lmu \leq n$, and assume that 
\begin{equation}\label{all eqn}
    \psilam \otimes \psimu =\sum  \clmn \psinu
\end{equation}
is the decomposition of the tensor products of $\psilam$ and $\psimu$ as irreducible representation of $G_{n+1}$. Assume that if $\clmn \neq 0$, then $\lnu \leq n$. Then the decomposition in equation (\ref{all eqn}) of the tensor product of $\psilam$ and $\psimu$, treated as representation of $G_{n+1}$ is the same as if $\psilam$ and $\psimu$ were treated as representation of $G_n$.
\end{theorem}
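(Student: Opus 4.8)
The plan is to pass from representations to characters and to feed the character identity through the algebra homomorphism $p_n$ whose effect on Schur functions is recorded just before the statement. Writing $S_{\lambar}$ for the Schur function, i.e.\ the character of $\psilam$ as a $G_{n+1}$-representation, the decomposition (\ref{all eqn}) is by multiplicativity of characters equivalent to the identity of symmetric polynomials in $n+1$ variables
\begin{equation*}
 S_{\lambar}(X_1, \dots, X_{n+1}) \, S_{\mubar}(X_1, \dots, X_{n+1}) = \sum_{\nubar} \clmn \, S_{\nubar}(X_1, \dots, X_{n+1}).
\end{equation*}

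First I would apply $p_n$ to this identity, which sets $X_{n+1} = 0$. Since $p_n$ is an algebra homomorphism it respects the product on the left. On the left-hand side the hypotheses $\llam \leq n$ and $\lmu \leq n$ give $\lambda_{n+1} = \mu_{n+1} = 0$, so the specialization formula yields $p_n(S_{\lambar}) = S_{\lambar}(X_1, \dots, X_n)$ and $p_n(S_{\mubar}) = S_{\mubar}(X_1, \dots, X_n)$; these are the characters of the irreducible $G_n$-representations of highest weights $\lambar$ and $\mubar$. On the right-hand side the assumption that $\clmn \neq 0$ forces $\lnu \leq n$ guarantees $\nu_{n+1} = 0$ for every $\nubar$ that occurs, so $p_n$ sends each such $S_{\nubar}$ to $S_{\nubar}(X_1, \dots, X_n)$ and annihilates no surviving term. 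Hence I obtain
\begin{equation*}
 S_{\lambar}(X_1, \dots, X_n) \, S_{\mubar}(X_1, \dots, X_n) = \sum_{\nubar} \clmn \, S_{\nubar}(X_1, \dots, X_n).
\end{equation*}

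To finish, I would observe that the right-hand side is a combination of the Schur functions $S_{\nubar}(X_1, \dots, X_n)$ with $\lnu \leq n$, which are exactly the irreducible characters of $G_n = \GL(n,\C)$ and are linearly independent in $\Z[X_1,\dots,X_n]^{\mathfrak{S}_n}$. Since a rational representation of $\GL(n,\C)$ is determined up to isomorphism by its character, this last identity says precisely that the tensor product of the two $G_n$-irreducibles decomposes as $\sum_{\nubar} \clmn \psinu$ with the same multiplicities $\clmn$ as in (\ref{all eqn}), which is the assertion. The only place the hypotheses are genuinely used, and the one point deserving care, is the behaviour of the two sides under $p_n$: were some $\nubar$ with $\nu_{n+1} > 0$ to appear with nonzero coefficient, then $p_n$ would delete that term and the multiplicities would no longer match. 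The assumption $\clmn \neq 0 \Rightarrow \lnu \leq n$ — which is exactly what the surrounding stability statements supply — is what excludes this, so beyond it there is no real obstacle.
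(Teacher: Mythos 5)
Your proposal is correct and follows essentially the same route as the paper: both arguments push the character identity through the specialization homomorphism $p_n$, use the hypotheses to ensure that no term on either side is annihilated, and conclude by the fact that irreducible characters of $\GL(n,\C)$ determine the decomposition. Your write-up simply makes explicit the linear-independence step that the paper leaves implicit.
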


\begin{proof}
The proof of this theorem amounts to the fact that $p_n$ is a
homomorphism of algebras and take character of irreducible representation of $G_{n+1}$ to those of $G_n$ under the hypothesis that the last entry of the highest weight of the representation of $G_{n+1}$ is zero. More precisely, the result for the tensor product for $\GL{(n,\C)}$ is obtained from the result for the tensor product for $\GL{(n+1,\C)}$  by simply deleting those representations of $\GL{(n+1,\C)}$ for which $l(\nubar)=n+1$; in particular these are at least as many irreducible representation in $\psilam \otimes \psimu$ on $\GL{(n+1,\C)}$ as on $\GL{(n,\C)}$.
\end{proof}

\begin{example}
We give decomposition of $\pi_{\lambar} \otimes \pi_{\mubar}$ for $\lambar=(2,1,1,0,0,\ldots)$ and  $\mubar=(1,1,0,0,0,\ldots)$ on appropriate general linear groups. All these calculation were done on \textit{Lie Software}. 
\end{example}

 \begingroup
\setlength{\tabcolsep}{20pt} 
\renewcommand{\arraystretch}{2} 
\begin{tabular} 
{ |m{1.9em}| m{4.7cm}| m{5.2cm} | }
 \hline 
  Groups & \centering $\Psi_{\lambar} \otimes \Psi_{\mubar}$ &  $\Psi_{\lambar} \otimes \Psi_{\mubar}$ as sum of irreducible representations \\
 \hline
 $\GL{(5)}$ & \centering (2,1,1,0,0) $\otimes$ (1,1,0,0,0) &  \setlength{\baselineskip}{17pt}  (2,2,2,0,0) + (2,2,1,1,0) + (2,1,1,1,1) + (3,2,1,0,0)\quad +\quad (3,1,1,1,0) \\
 \hline
 $\GL{(6)}$ & \centering (2,1,1,0,0,0) $\otimes$ (1,1,0,0,0,0)  &  \setlength{\baselineskip}{17pt} (2,2,2,0,0,0) + (2,2,1,1,0,0) + (2,1,1,1,1,0) + (3,2,1,0,0,0)\quad+ (3,1,1,1,0,0) \\
\hline
\end{tabular}
\captionof{table}{Decomposition for \GL{(n)}}\label{table gl}
\endgroup

\vspace{1cm}

\begin{remark}
 In Table~\ref{table gl} for $\GL{(n)}$, the tensor product stabilizes at $\GL{(5)}$ level, with $5=\llam + \lmu$, i.e., the result for the tensor product for $\GL{(6)}$ is obtained by just adding a 0 at the end of the corresponding result for $\GL{(5)}$ which verifies the Theorem~\ref{thm all}.
\end{remark}

\section{Pieri's formula for Classical Groups and Consequences}\label{sec pieri}

For $V$ a finite dimensional vector space over $\C$, let $q:V \to \C$ be a non-degenerate quadratic form on $V$, considered as an element of $\Sym^2(V^{*})$, giving rise to the contraction map $$\Sym^2(V^{*}) \times \Sym^k(V) \to \Sym^{k-2}(V).$$ Let $\Pi_{(k)}$ be the kernel of the contraction map from $\Sym^k V$ to $\Sym^{k-2} V$. Then  $\Pi_{(k)}$ is the irreducible representation of $\SO{(2n+1,\C)}$ with highest weight $k L_{1}$ and 
$$
 \Sym^k V = \Pi_{(k)}  \oplus \Pi_{(k-2)} \oplus \cdots \oplus \Pi_{(k-2p)}, 
$$
where $p$ is the largest integer $\leq k / 2$  (c.f. Section 19.5 of \cite{FH}).

For a sequence of integers $\lambar = \left\{ \lamr : \lambda_{i} \geq 0 \right\}$, let $l(\lambda)$ be the length of $\underline{\lambda}$, the largest integer $s$ such that $\lambda_{s} \neq 0$. Let $|\lambar|$ be the sum of nonzero parts of a weight $\lambar$. For any two sequence of integers $\lambar$ and $\xibar$ such that the Young diagram of $\lambar$ contains the Young diagram of $\xibar$, define $|\lambar / \xibar| = |\lambar| - |\xibar|$. We say that $\lambar / \xibar$ is a horizontal strips if $\lambda_1 \geq \xi_1 \geq \lambda_2 \geq \xi_2 \geq \cdots \geq \lambda_{l} \geq \xi_l  \geq \cdots$.




 We will use Pieri's theorem from \cite{SO}. Pieri's formula in \cite{SO} for $\Sp{(2n)}$, $\SO{(2n+1)}$ and $\SO{(2n)}$ has slightly different formulation but we will look only at those irreducible representations whose highest weights have the last coordinate equal to zero for $\Sp{(2n)}$, $\SO{(2n+1)}$ or the last two coordinates are zero for $\SO{(2n)}$. In this cases the Pieri's rule from \cite{SO} simplifies to the following theorem.
 
\begin{theorem}\label{pieri thm}
Let $G_n$ be any of the classical groups $\Sp{(2n,\C)}$, $\SO{(2n+1,\C)}$, $\SO{(2n,\C)}$. Let $\pilam$ be an irreducible highest weight representation with highest weight $\lambar = (\lambda_1 \geq  \cdots \geq \lambda_{n} \geq 0)$. Assume that if $G_n= \Sp{(2n,\C)} \text{ and } \SO{(2n+1,\C)}$, $\lambda_n=0$ and if $G_n= \SO{(2n,\C)}$, $\lambda_{n-1}=\lambda_n=0$, then 

\begin{equation*}
    \pilam \otimes \Pi_{(k)} = \bigoplus_{\mubar} \nlkm  \pimu 
\end{equation*}
with $\nlkm$ given by:
$$\nlkm = {\#} \left \{ \xibar : \text{ sequence of integers } \xibar \text{ satisfying the following two conditions}  \right\} $$

\begin{enumerate}
    \item $\lambar / \xibar$ and $\mubar / \xibar$ are both horizontal strips, \vspace{1.5mm}
    \item $|\lambar / \xibar| + |\mubar / \xibar|=k$. 
\end{enumerate}

\end{theorem}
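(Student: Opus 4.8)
The plan is to obtain this statement by specializing the general Pieri rule for classical groups of \cite{SO} to highest weights whose last coordinate (last two, for $\SO(2n,\C)$) vanishes, and then to check that in this range the three groups give one and the same uncorrected count. Recall from the displayed decomposition above that $\Pi_{(k)}$ is the harmonic summand of $\Sym^k V$, the kernel of contraction against the form $q$; this is precisely the representation whose Pieri rule \cite{SO} computes. In general that rule expresses $\pilam \otimes \Pi_{(k)}$ as a sum over intermediate sequences $\xibar$ satisfying exactly the strip conditions of the statement, but \emph{decorated} by modification (straightening) rules that intervene whenever an intermediate or final sequence reaches a wall of the dominant chamber of $G_n$ --- that is, acquires too many rows, or, for $\SO(2n,\C)$, a nonzero last coordinate carrying the $\pm$ ambiguity.

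First I would transcribe the rule of \cite{SO} for each of $\Sp(2n,\C)$, $\SO(2n+1,\C)$, $\SO(2n,\C)$ into the present notation, isolating the raw combinatorial count --- choose $\xibar$ by deleting a horizontal strip from $\lambar$, then add a horizontal strip to reach $\mubar$, with $|\lambar/\xibar| + |\mubar/\xibar| = k$ --- from the boundary corrections. The second and main step is the observation that the zero-coordinate hypothesis forces every $\xibar$ and $\mubar$ entering the sum to stay in the stable range where these corrections are vacuous: deleting a horizontal strip only shortens, so $l(\xibar) \leq \llam$, and adding a horizontal strip lengthens by at most one, so $l(\mubar) \leq \llam + 1$. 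For $\Sp(2n,\C)$ and $\SO(2n+1,\C)$ the hypothesis $\lambda_n = 0$ gives $\llam \leq n-1$, hence $l(\mubar) \leq n$ and no sequence exceeds $n$ rows; for $\SO(2n,\C)$ the hypothesis $\lambda_{n-1} = \lambda_n = 0$ gives $\llam \leq n-2$, hence $l(\mubar) \leq n-1$, so $\mu_n = 0$ and the $\pm$ ambiguity never arises. In each case the modification rules are therefore triggered nowhere, and the rule collapses to the bare count, which is the formula for $\nlkm$ in the statement. This also explains the asymmetric hypotheses: the orthogonal even case needs the extra zero precisely to keep $\mu_n = 0$.

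Finally I would note that this collapsed count refers only to $\lambar$, $\mubar$, $k$ and the combinatorics of horizontal strips, with no reference to the ambient group, which is exactly why the three families share one formula and why each such $\mubar$ occurs with multiplicity $\nlkm$ and no other $\mubar$ occurs. The main obstacle is the faithful translation: \cite{SO} parametrizes its rule differently, via tableaux and a different indexing of the strips, so the real work is matching conventions and verifying that its boundary modification terms genuinely \emph{vanish} --- rather than merely simplify --- under the stated hypothesis, with the $\SO(2n,\C)$ sign ambiguity the most delicate point to rule out.
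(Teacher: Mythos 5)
Your proposal follows essentially the same route as the paper: both specialize Okada's Pieri rules to the case where the last coordinate(s) of $\lambar$ vanish, and both kill the boundary correction terms by the observation that $\xibar$ sits inside $\lambar$ (so $\xi_n=0$, eliminating the extra $|\lambar/\xibar|+|\mubar/\xibar|=k-1$ option for $\SO(2n+1,\C)$ and forcing $\xi_{n-1}=\xi_n=0$ for $\SO(2n,\C)$), while adding a horizontal strip keeps $\mu_n=0$ so the $\pm$ ambiguity never arises. The paper's proof is exactly this reduction, stated just as briefly, so your plan is correct and matches it.
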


\begin{proof}
The symplectic Pieri's formula (Lemma~\ref{Pieri Sp}) 
is the same as what is asserted above. For odd orthogonal groups the Pieri's formula (Lemma~\ref{pieri odd}) 
allows the condition  $|\lambar / \xibar| + |\mubar / \xibar|=k \text{ and }k-1$. But the condition  $|\lambar / \xibar| + |\mubar / \xibar|=k-1$ does not occur as $\llam \neq n$. 
Hence the Pieri's formula (Lemma~\ref{pieri odd}) 
simplifies to the formula asserted above. 
Similarly for the even orthogonal groups, in Lemma~\ref{pieri even}, the condition $\lambda_{n-1}=\lambda_n=0$  implies $\xi_{n-1}=\xi_n=0$, hence the Pieri's formula (Lemma~\ref{pieri even}) 
reduces to the assertion in our theorem.
\end{proof}

\begin{corollary} \label{prod decom}
Let $G_n$ be any of the classical groups $\Sp{(2n,\C)}$, $\SO{(2n+1,\C)}$, $\SO{(2n,\C)}$. Let $\pilam$ be an irreducible highest weight representation of $G_n$ with highest weight $\lambar = (\lambda_1 \geq  \cdots \geq \lambda_{n} \geq 0)$. Assume that if $G_n= \Sp{(2n,\C)} \text{ and } \SO{(2n+1,\C)}$, $\lambda_n=0$ and if $G_n= \SO{(2n,\C)}$, $\lambda_{n-1}=\lambda_n=0$. Then, 
$$ \pilam \subseteq \pilamprodn .$$
\end{corollary}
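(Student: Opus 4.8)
The plan is to mimic the proof of Lemma~\ref{vl g sub sym}, replacing the general linear Pieri rule by the classical Pieri rule of Theorem~\ref{pieri thm}. I would argue by induction on the length $r=\llam$, holding the rank $n$ of $G_n$ fixed; throughout, $\pilam$ and all the $\Pi_{(\lambda_i)}$ are representations of the same group $G_n$, so the induction is purely on the number of nonzero rows of $\lambar$. The base case $\llam\leq 1$ is immediate, since then $\pilam=\Pi_{(\lambda_1)}$ (and for $\llam=0$ both sides are the trivial representation).

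For the inductive step I would set $\lambard=(\lambda_1\geq\cdots\geq\lambda_{r-1})$. The induction hypothesis gives $\pilamd\subseteq\pilamprodrmo$, and tensoring with $\Pi_{(\lambda_r)}$ yields
$$ \pilamd\otimes\Pi_{(\lambda_r)}\subseteq \Pi_{(\lambda_1)}\otimes\cdots\otimes\Pi_{(\lambda_r)}\subseteq \pilamprodn, $$
where the last inclusion holds because the factors $\Pi_{(\lambda_i)}$ with $i>r$ are trivial. It then suffices to show that $\pilam$ occurs in $\pilamd\otimes\Pi_{(\lambda_r)}$.

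To establish this I would apply Theorem~\ref{pieri thm} with $k=\lambda_r$ and starting weight $\lambard$, so that the multiplicity of $\pilam$ equals $\nldlrl$. Choosing $\xibar=\lambard$ makes $\lambard/\xibar$ empty (trivially a horizontal strip) and makes $\lambar/\xibar$ the single bottom row of $\lambda_r$ boxes; the inequality $\lambda_{r-1}\geq\lambda_r$ shows this too is a horizontal strip, and $|\lambard/\xibar|+|\lambar/\xibar|=0+\lambda_r=k$. Hence $\nldlrl\geq 1$, so $\pilam\subseteq\pilamd\otimes\Pi_{(\lambda_r)}$, which completes the induction.

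The only point that requires care — and the closest thing to an obstacle — is verifying that Theorem~\ref{pieri thm} may legitimately be invoked for the weight $\lambard$: its hypothesis demands that the last coordinate (for $\Sp(2n,\C),\SO(2n+1,\C)$) or the last two coordinates (for $\SO(2n,\C)$) of the starting weight vanish. But $\lambard$ is obtained from $\lambar$ by deleting a row, so it has at least as many trailing zeros as $\lambar$, which already satisfies the hypothesis by assumption; the condition is therefore inherited at every stage of the induction. This inheritance is precisely what keeps the classical Pieri rule applicable throughout, and is the analogue of the fact that no vanishing condition was needed in the general linear case of Lemma~\ref{vl g sub sym}.
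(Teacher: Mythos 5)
Your proposal is correct and follows essentially the same route as the paper: induction on $\llam$ with $\lambard=(\lambda_1\geq\cdots\geq\lambda_{r-1})$, the inclusion $\pilamd\otimes\Pi_{(\lambda_r)}\subseteq\pilamprodr$, and positivity of $\nldlrl$ via Theorem~\ref{pieri thm}. You merely spell out two details the paper leaves implicit --- the explicit witness $\xibar=\lambard$ showing $\nldlrl\geq 1$, and the fact that the trailing-zero hypothesis of the Pieri rule is inherited by $\lambard$ --- both of which are accurate and welcome.
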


\noindent{\bf Proof.}
Assuming the conditions in the theorem, we have $\lambar = (\lambda_1 \geq  \cdots \geq \lambda_{r} \geq 0)$ where $r=n-1$ for $\Sp{(2n,\C)}$ and $\SO{(2n+1,\C)}$, and $r=n-2$ for $\SO{(2n,\C)}$. Note that $\Pi_{(0)}=\C$, so it is enough to prove that $$ \pilam \subseteq \pilamprodr. $$
We give a proof by an induction on $r$, using Pieri's Theorem~\ref{pieri thm}. Thus we assume that the Corollary is true if $\llam \leq r-1$, and then prove it for $\lambar$ with $\llam =r$.

Let $\lambard=\left \{\lambda_1 \geq \cdots \geq \lambda_{r-1}  \right \}$. By induction hypothesis, $$\pilamd \subseteq \pilamprodrmo .$$
Therefore $$\pilamd \otimes \Pi_{(\lambda_r)} \subseteq \pilamprodr. $$
Now $\nldlrl$ is a positive integer as $\lambard$ satisfies both the conditions in Pieri's Theorem~\ref{pieri thm}, so  $$\pilam \subseteq \pilamd \otimes \Pi_{(\lambda_r)}, $$
hence  $$\pilam \subseteq \pilamprodr. \qed$$

\begin{corollary}\label{lemma length}
Let $G_n$ be any of the classical groups $\Sp{(2n,\C)}$, $\SO{(2n+1,\C)}$, $\SO{(2n,\C)}$. Let $\pilam$ be an irreducible highest weight representation of $G_n$ with highest weight $\lambar = (\lambda_1 \geq  \cdots \geq \lambda_{n} \geq 0)$. Assume that if $G_n= \Sp{(2n,\C)} \text{ and } \SO{(2n+1,\C)}$, $\lambda_n=0$ and if $G_n= \SO{(2n,\C)}$, $\lambda_{n-1}=\lambda_n=0$. If $$\pilam \otimes \Pi_{(k)} = \bigoplus_{\mubar} \nlkm \left(n \right) \pimu ,$$  with $\nlkm \left(n \right) >0$, then $\lmu \leq \llam +1$. 
\end{corollary}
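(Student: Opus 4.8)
The plan is to reduce the statement entirely to the combinatorial description of the multiplicity $\nlkm(n)$ furnished by Pieri's theorem~(\ref{pieri thm}). If $\nlkm(n) > 0$, then by that theorem there exists at least one sequence of integers $\xibar$ for which both $\lambar/\xibar$ and $\mubar/\xibar$ are horizontal strips. I would fix such a $\xibar$ and extract length information from the horizontal-strip conditions alone; the numerical condition $|\lambar/\xibar| + |\mubar/\xibar| = k$ plays no role in this argument.

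First I would record what it means for a skew shape to be a horizontal strip in terms of interlacing inequalities. Namely, $\lambar/\xibar$ being a horizontal strip is equivalent to $\lambda_1 \geq \xi_1 \geq \lambda_2 \geq \xi_2 \geq \cdots$, and likewise $\mubar/\xibar$ being a horizontal strip is equivalent to $\mu_1 \geq \xi_1 \geq \mu_2 \geq \xi_2 \geq \cdots$. The two consequences I need are: $\xi_i \leq \lambda_i$ for all $i$ (from the first chain), and $\mu_{i+1} \leq \xi_i$ for all $i$ (from the second chain).

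Next I would bound the length of $\xibar$. Writing $s = \llam$, so that $\lambda_{s+1} = 0$, the inequality $\xi_{s+1} \leq \lambda_{s+1} = 0$ forces $\xi_{s+1} = 0$, hence $l(\xibar) \leq s$. Feeding this into the second interlacing at index $i = s+1$ gives $\mu_{s+2} \leq \xi_{s+1} = 0$, so $\mu_{s+2} = 0$ and therefore $\lmu \leq s+1 = \llam + 1$, as claimed. Since this holds for every admissible $\xibar$, and in particular for the one guaranteed by $\nlkm(n) > 0$, the conclusion follows.

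I do not expect a serious obstacle here: the entire content is the translation of the horizontal-strip condition into interlacing inequalities together with two substitutions. The only points demanding care are getting the direction of the interlacing right, so that the strip condition yields $\mu_{i+1} \leq \xi_i$ (this is exactly what transmits the length bound from $\lambar$ through $\xibar$ to $\mubar$), and verifying that the resulting shift in length is at most $+1$. The hypothesis that $\lambda_n = 0$ (respectively $\lambda_{n-1} = \lambda_n = 0$) is what licenses the use of the simplified Pieri rule of theorem~(\ref{pieri thm}) in the first place, so I would note that these conditions are in force before invoking it.
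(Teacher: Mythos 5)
Your argument is correct and is essentially identical to the paper's own proof: both extract the interlacing inequalities $\lambda_1 \geq \xi_1 \geq \lambda_2 \geq \cdots$ and $\mu_1 \geq \xi_1 \geq \mu_2 \geq \cdots$ from the horizontal-strip conditions in Theorem~\ref{pieri thm}, then chain $\lambda_{s+1}=0 \Rightarrow \xi_{s+1}=0 \Rightarrow \mu_{s+2}=0$ with $s = \llam$. No gaps; nothing further to add.
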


\begin{proof}
 Let $\llam =l$. So by assumption $l\leq n-1$ for $\Sp{(2n, \C)}$ and $\SO{(2n+1, \C)}$ and $l\leq n-2$ for $\SO{(2n, \C)}$. By Pieri's Theorem~\ref{pieri thm}, $\lambar / \xibar$ and $\mubar / \xibar$ are horizontal strips,
\begin{align*}
  &\lambda_1 \geq \xi_1 \geq \lambda_2 \geq \xi_2 \geq \cdots \geq \lambda_{l} \geq \xi_l \geq \lambda_{l+1} \geq \xi_{l+1} \geq \cdots , \\
  & \mu_1 \geq \xi_1 \geq \mu_2 \geq \xi_2 \geq \cdots \geq \mu_{l+1} \geq \xi_{l+1} \geq \mu_{l+2} \geq  \cdots .
\end{align*}
 So $\lambda_{l+1}=0$ implies  $\xi_{l+1}=0$ which further gives $\mu_{l+2}=0$. Hence $\lmu \leq \llam +  1$.
\end{proof}

For our purposes these corollaries are enough but these corollaries are true without putting $\lambda_n=0$ or $\lambda_{n-1}=\lambda_{n}=0$.  More precise versions of the corollaries will need more precise versions of the Pieri's Rule which are slightly different for different classical groups.

\vspace{1mm}
\begin{corollary} \label{prop order}
Let $G_n$ be any of the classical groups $\Sp{(2n,\C)}$, $\SO{(2n+1,\C)}$, $\SO{(2n,\C)}$. Let $\pimu$ be an irreducible highest weight representation with highest weight $\mubar=(\mu_1 \geq \mu_2 \geq \cdots \geq \mu_{n-1} \geq \mu_n \geq 0)$. Suppose, $$r = \begin{cases} n &\text{ if  } \Sp{(2n,\C)}, \SO{(2n+1,\C)},\\
n-1 &\text{ if  } \SO{(2n,\C)}.
\end{cases} $$
 Assume $\lmu=r$ and define $\mubard=(\mu_1 \geq \mu_2 \geq \cdots \geq \mu_{r-1})$, then we have the following isomorphism of  $G_n$-modules,
 $$\pimud \otimes \Pi_{\mu_r} \cong \pimu  \bigoplus_{\substack{\nu_r < \mu_r\\ \lnu \leq r}}
 \ndmrn \pinu .$$
\end{corollary}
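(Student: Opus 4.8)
The plan is to apply Pieri's theorem~(\ref{pieri thm}) directly, taking the representation on the left to be $\pimud$ (so $\lambar = \mubard$) and tensoring with $\Pi_{(\mu_r)}$ (so $k = \mu_r$). Since $\mubard = (\mu_1 \geq \cdots \geq \mu_{r-1})$ has length at most $r-1$, its $r$-th coordinate vanishes (and in the $\SO(2n,\C)$ case, where $r = n-1$, also its $(r+1)$-th), so the hypotheses of Pieri's theorem hold automatically for all three families. Thus $\ndmrn$ equals the number of sequences $\xibar$ for which $\mubard/\xibar$ and $\nubar/\xibar$ are horizontal strips with $|\mubard/\xibar| + |\nubar/\xibar| = \mu_r$.

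First I would record the interlacing inequalities coming from the two horizontal-strip conditions. That $\mubard/\xibar$ is a horizontal strip reads $\mu_1 \geq \xi_1 \geq \mu_2 \geq \cdots \geq \mu_{r-1} \geq \xi_{r-1} \geq 0$, and forces $\xi_r = 0$ since $\mubard$ has no $r$-th part. That $\nubar/\xibar$ is a horizontal strip reads $\nu_1 \geq \xi_1 \geq \nu_2 \geq \cdots \geq \nu_r \geq \xi_r$, so $\nu_{r+1} \leq \xi_r = 0$; this already supplies the constraint $\lnu \leq r$ asserted in the statement. Using $\xi_r = 0$, the size condition $|\mubard/\xibar| + |\nubar/\xibar| = \mu_r$ rewrites as
$$\sum_{i=1}^{r-1}\bigl[(\mu_i - \xi_i) + (\nu_i - \xi_i)\bigr] = \mu_r - \nu_r.$$

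The decisive observation, exactly parallel to the general linear case, is that each bracket $(\mu_i - \xi_i) + (\nu_i - \xi_i)$ is nonnegative because both skew shapes are horizontal strips; hence the left-hand side is $\geq 0$ and so $\nu_r \leq \mu_r$ for every $\nubar$ that occurs. When $\nu_r = \mu_r$ the left-hand side must vanish termwise, forcing $\xi_i = \mu_i = \nu_i$ for all $i \leq r-1$; therefore $\nubar = \mubar$, the witnessing $\xibar = \mubard$ is unique, and $\pimu$ appears with multiplicity exactly one. Every remaining constituent then satisfies $\nu_r < \mu_r$, which is precisely the claimed decomposition.

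I do not anticipate a genuine obstacle: the argument transcribes the analogous general linear computation, the one new feature being that the classical Pieri rule passes through an intermediate partition $\xibar$ rather than requiring $\nubar/\mubard$ itself to be a horizontal strip. The only point demanding care is the bookkeeping of the boundary coordinates $\xi_r$, $\nu_r$, $\nu_{r+1}$ and checking that the vanishing forced by $l(\mubard) \leq r-1$ is consistent across the three families; once $\xi_r = 0$ is established, the sign argument runs uniformly.
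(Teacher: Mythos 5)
Your proposal is correct and follows essentially the same route as the paper: apply the classical Pieri rule to $\pimud \otimes \Pi_{(\mu_r)}$, note $\xi_r = 0$, and use the nonnegativity of the terms in the size identity $\sum_{i=1}^{r-1}\left[(\mu_i-\xi_i)+(\nu_i-\xi_i)\right] = \mu_r - \nu_r$ to rule out $\nu_r > \mu_r$ and to force $\nubar = \mubar$ with a unique $\xibar$ when $\nu_r = \mu_r$. You in fact supply slightly more detail than the paper (explicitly verifying the hypotheses of the Pieri theorem and deriving the constraint $\lnu \leq r$), but the argument is the same.
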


\begin{proof}
We will appeal to Theorem~\ref{pieri thm} to prove this proposition. 
Let $\nubar =(\nu_1 \geq \nu_2 \geq \cdots \geq \nu_n \geq 0)$.  We will prove that $\nu_r = \mu_r$ implies $\nubar = \mubar$. So it is enough to prove 

$$\ndmrn = \begin{cases}
1 & \text{ when }  \nubar = \mubar \\
0 & \text{ when }  \nu_r > \mu_r,
\end{cases}
$$
where $\ndmrn$ is the cardinality of the set of $\xibar$ 
satisfying the conditions that $\mubard/\xibar$ and $\nubar/\xibar$ are both horizontal strips and $|\mubard/\xibar| + |\nubar/\xibar| =\mu_r$. Horizontal strip condition of $\mubard/\xibar$ implies $\xi_r=0$ and we get the equation
\begin{equation} \label{sp 01}
    \sum_{i=1}^{r-1}\left(\mu_{i}-\xi_{i}\right) + \sum_{i=1}^{r-1}\left(\nu_{i}-\xi_{i}\right)= \mu_r - \nu_r
\end{equation}
from the condition $|\mubard/\xibar| + |\nubar/\xibar| =\mu_r$.

If $\mu_r = \nu_r$ then RHS of the equation (\ref{sp 01}) becomes zero, and since $\mubard/\xibar$ and $\nubar/\xibar$ are horizontal strip, we have $\mu_{i} \geq \xi_{i}$ and $\nu_{i} \geq \xi_{i}$, hence $\xi_i=\mu_i$ and $\nu_{i} = \xi_{i}$ for $1 \leq i \leq r-1$.
So we get $\nu_i=\mu_i$ for $1 \leq i \leq r-1$ and  $\xibar=\mubard$ 
The former implies $\nubar = \mubar$ and the later tells $$\ndmrn=1 \text{ for } \nubar = \mubar.$$

If $\nu_r > \mu_r$, then  RHS of the equation (\ref{sp 01}) becomes negative which is not possible as LHS is always positive as $\mubard/\xibar$ and $\nubar/\xibar$ are horizontal strip. So no such $\xibar$ exists for this case and hence $\ndmrn =0$.
\end{proof}

\section{Classical Groups}\label{sec clas grp}
For the convenience of the reader we state  Theorem~\ref{main thm} from the introduction again.
\vspace{1mm}
\begin{theorem}\label{thm stability}
Let $G_n$ be one of the groups $\Sp{(2n,\C)}$, $\SO{(2n+1,\C)}$, $\SO{(2n,\C)}$. Let $\lambar=\{ \lamr \geq 0  \}$ and $\mubar=\left\{ \mur \geq 0 \right \}$ be two sequence of integers, and $\pilam^n$ and $\pimu^n$ the corresponding irreducible representations of $G_n$ for $n \geq \text{ max }\{ \llam , \lmu\}$. Write the tensor product of $\pilam^n$ and $\pimu^n$ as,
\begin{equation}
    \pilam^n \otimes \pimu^n = \sum_{\nubar} \nlmn \left (n \right) \pinu^n .
\end{equation}
Suppose $n_0=\llam + \lmu$. Then,
\vspace{1mm}
\begin{enumerate}
    \item $\nlmn \left (n \right)=0$ \quad if \quad $\lnu \geq n_{0}+1$,
    \vspace{.15cm}
    \item $\nlmn \left (n \right)=\nlmn \left (n+1 \right)$ \hspace{.5mm} for \hspace{.5mm} $n \geq$ $\begin{cases}  n_0 &\text{ if } G_n = \Sp{(2n,\C)}, \SO{(2n+1,\C)}; \\ n_0 +1
    &\text{ if } G_n = \SO{(2n,\C)},
    \end{cases}$ 
     \vspace{.15cm}
    \item $\nlmn \left (n \right)$ is independent of the group $G_n=\Sp{(2n,\C)}, \SO{(2n+1,\C)}$ if $n \geq n_{0}$,
    \vspace{.15cm}
    \item $\nlmn \left (n \right)$ is independent of the group $G_n$ if $n \geq n_{0}+1$.
\end{enumerate}
\end{theorem}

{ {\bf Proof.}}
The proof of this theorem will be by an inductive process which will prove all the three conclusions (1), (2), (3) and (4) at the same time. The induction involved will be using an ordering on the sequence of integers $\mubar=\left\{ \mur \geq 0 \right \}$ in which we declare that $\mubar > \mubart=(\tilde{\mu}_1 \geq \cdots \geq \tilde{\mu}_r \geq 0)$ if 
\begin{enumerate}
    \item either $\lmu > l(\mubart)$, or 
    \item If $\lmu = l(\mubart)=r$, then $\mu_r > \tilde{\mu}_r$.
\end{enumerate}
With this ordering on the sequence of integers $\mubar=( \mur \geq 0 )$, Corollary~\ref{prop order} asserts that 
 $$\pimud^n \otimes \Pi_{\mu_r}^n = \pimu^n  \hspace{.2cm} \bigoplus_{\substack{\nu_r < \mu_r\\ \lnu \leq r}} \hspace{1mm} \ndmrn \left (n \right) \hspace{.1cm} \pinu^n .$$
 
 Now we give the inductive argument of the proof of this theorem for $ \pilam^n \otimes \pimu^n$ assuming that the theorem is true for $ \pilam^n \otimes \Pi_{\mubart}^n$ whenever $\mubart < \mubar$.
 
 Since we are assuming that our theorem is true for $ \pilam^n \otimes \Pi_{\mubart}^n$ for $\mubart < \mubar$, in particular, it is true for $ \pilam^n \otimes \Pi_{\mubard}^n$, and hence also for $ \pilam^n \otimes \Pi_{\mubard}^n \otimes \Pi_{\mu_r}$ by Pieri's Theorem~\ref{pieri thm}. Hence our theorem is also true for 
 $$ \pilam^n \otimes \left( \pimu^n  \bigoplus_{\substack{\nu_r < \mu_r\\ \lnu \leq r}} \ndmrn \left (n \right) \pinu^n   \right) .$$
 Again by inductive hypothesis, the theorem is true for all $\nubar$ with $\nubar < \mubar$ appearing in the above sum. Therefore as a consequence, our theorem is true for the remaining term in the above sum, i.e. $ \pilam^n \otimes \pimu^n$. \qed

\section{Just Before Stability level }\label{sec before stable}

We will be following the notation of section~\ref{sec pieri}. Theorem~\ref{thm before stable} is a further improvement of the Theorem~\ref{thm stability} in the sense now one can talk about the multiplicity of the highest weight irreducible representation appearing in tensor product of all the classical groups $G_{N-1}$ if $N$ is the stablility level corresponding to that tensor product.    We also discuss about the parity of $|\nubar|$ corresponding to $\nubar$ appearing in the tensor product for some ranges.

Let us recall Pieri's rule for symplectic group and odd orthogonal group respectively.  

\begin{lemma}{(Pieri's Rule \cite{SO}) }\label{Pieri Sp}
Let  $\lambar=\left\{ \lambda_1 \geq \lambda_2 \geq \cdots \geq \lambda_n \geq 0 \right \}$ be a sequence of integers, $k$ a positive integer and $\pilam^n$ be the corresponding irreducible highest weight module of $\Sp{(2n,\C)}$. If 
\begin{equation}
    \pilam^n \otimes \Pi_{(k)}^n = \bigoplus_{\mubar} \nlkm \left (n \right) \pimu^n, 
\end{equation}
then, 
$$\nlkm (n) = {\#} \left \{ \xibar : \lbx \text{ and }  \mbx \text{ are both horizontal strips and } |\lbx| + |\mbx|=k  \right\} .$$

\end{lemma}

\vspace{1mm}

\begin{lemma}{(Pieri's Rule \cite{SO})} \label{pieri odd}
Let $\lambar$ and $\mubar$ be sequence of integers of length $\leq n$, $k$ a nonnegative integer and $\pilam^n$ be the corresponding irreducible highest weight module of $\SO{(2n+1,\C)}$. If 
\begin{equation}
    \pilam^n \otimes \Pi_{(k)}^n = \bigoplus_{\mubar} \nlkm \left (n \right) \pimu^n, 
\end{equation}
then,
$$\nlkm (n)= {\#} \left \{ \xibar : \text{ sequence of integers } \xibar \text{ satisfying the following three conditions}  \right\} .$$

\begin{enumerate}
    \item $\lambar / \xibar$ and $\mubar / \xibar$ are both horizontal strips, \vspace{1mm}
    \item $|\lambar / \xibar| + |\mubar / \xibar|=k$ or $k-1$, \vspace{1mm}
    \item If $|\lambar / \xibar| + |\mubar / \xibar|=k-1$, then $l(\xibar)=l(\lambar)=n$.
\end{enumerate}

\end{lemma}

These two Lemmas are needed for the proof of the 2nd part of the Theorem~\ref{thm before stable}.

\begin{lemma}(\cite{SO})\label{pieri even}
Let  $\lambar=\left\{ \lambda_1 \geq \lambda_2 \geq \cdots \geq \lambda_n \geq 0 \right \}$ be a sequence of integers, $k$ a positive integer and $\pilam^n$ be the corresponding irreducible highest weight module of $\SO{(2n,\C)}$. Let  $\mubar =\{\mu_{1} \geq \cdots \geq \mu_{n-1} \geq |\mu_{n}| \geq 0\}$ be a sequence of integers. If 

\begin{equation}
    \pilam^n \otimes \Pi_{(k)}^n = \bigoplus_{\mubar} \nlkm \left (n \right) \pimu^n, 
\end{equation}
then,
$$\nlkm (n)= {\#} \left \{ \xibar : \text{ sequence of integers } \xibar \text{ satisfying the following four conditions}  \right\} .$$

\begin{enumerate}
    \item $\xi_{1} \geq \xi_{2} \geq \cdots \geq \xi_{n-1} \geq\left|\xi_{n}\right|$,\vspace{1mm}
    \item  $\lambda_{1} \geq \xi_{1} \geq \lambda_{2} \geq \xi_{2} \geq \cdots \geq \xi_{n-1} \geq$ $\lambda_{n} \geq \xi_{n}$ \quad and \vspace{1mm} \\ 
    $\mu_{1} \geq \xi_{1} \geq \mu_{2} \geq \xi_{2} \geq \cdots \geq \xi_{n-1} \geq \mu_{n} \geq \xi_{n}$, \vspace{1mm}
    \item $\sum_{i=1}^{n}\left(\lambda_{i}-\xi_{i}\right) + \sum_{i=1}^{n}\left(\mu_{i}-\xi_{i}\right)=k$, \vspace{1mm}
    \item $\xi_{n} \in\left\{ \lambda_{n}, \mu_{n}\right\}$.

\end{enumerate}

\end{lemma}

Lemma~\ref{pieri even} is needed to prove 3rd part of the Theorem~\ref{thm before stable}.

\vspace{1mm}
\begin{theorem}\label{thm before stable}
Let $G_n$ be one of the groups $\Sp{(2n,\C)}$, $\SO{(2n+1,\C)}$, $\SO{(2n,\C)}$. Let $\lambar=\{ \lamr \geq 0  \}$ and $\mubar=\left\{ \mur \geq 0 \right \}$ be two sequence of integers, and $\pilam^n$ and $\pimu^n$ the corresponding irreducible representations of $G_n$ for $n \geq \text{ max }\{ \llam , \lmu\}$. Write the tensor product of $\pilam^n$ and $\pimu^n$ as,
\begin{equation}
    \pilam^n \otimes \pimu^n = \sum_{\nubar} \nlmn \left (n \right) \pinu^n .
\end{equation}
Suppose $n_0=\llam + \lmu$. Then,
\vspace{.3cm}
\begin{enumerate}
    \item \label{one prelevel} Let $\nlmn (n) \neq 0$. Then, 
    \begin{equation*}
        |\nubar| \equiv |\lambar| + |\mubar| \mod 2
    \end{equation*}
      for $\quad$
$n \geq$ $\begin{cases}  \text{ max } \{\llam, \lmu \} & \quad \text{ if } \quad G_n = \Sp{(2n,\C)}, \SO{(2n,\C)}; \\ \hspace{1cm} n_0
    & \quad \text{ if } \quad G_n = \SO{(2n+1,\C)}.\\
    \end{cases}$     
  \vspace{.3cm}  

 \item \label{two prelevel} Let $\nubar=\{\nu_1 \geq \ldots \geq \nu_{n_0-1} \geq 0\}$. Define $d_{\nubar}:=|\lambar| + |\mubar|-|\nubar|$ and $\nubar^{(1)}:=(\nu_1,\ldots,\nu_{n_0-1},1)$ if $ \nu_{n_0-1} \geq 1$. Then, 
 \begin{equation*} 
     \nlmn \left (n_0-1 \right)= \begin{cases}\nlmn \left (n_0 \right)  & \text{ if } \hspace{.2cm} G_n = \Sp{(2n,\C)}, \vspace{.25cm}\\
\begin{rcases}
\nlmn \left (n_0 \right)  & \text{ if } d_{\nubar} \equiv 0 \mod 2
     \vspace{.1cm}\\
     \nlmnone \left (n_0 \right)  & \text{ if } d_{\nubar} \equiv 1 \mod 2  \vspace{.15cm}
\end{rcases}  & \text{ if } \hspace{.2cm} G_n = \SO{(2n+1,\C)}.
       \\
    \end{cases}
 \end{equation*}
     
 \vspace{.3cm}

   \item \label{three prelevel} Let $G_n = \SO{(2n,\C)}$. Let  $\nubar=\{\nu_1 \geq \ldots \geq \nu_{n_0} \geq 0\}$ and define  $\nubar^{(0)}:=(\nu_1,\ldots,\nu_{n_0-1}, -\nu_{n_0})$ if $\nu_{n_0} \geq 1$. Then, 
   \vspace{.15cm}
   \begin{enumerate}
       \item $\nlmno \left (n_0 \right)= \nlmn \left (n_0 \right)$, \vspace{1.59mm}
       \item $ \nlmn \left (n_0 \right)= 
 \nlmn \left (n_0 +1 \right).$
   \end{enumerate}   
\end{enumerate}
\end{theorem}

\vspace{2mm}
{\bf Proof of (1)}
Parity for $\SO{(2n)}$ and $\Sp{(2n)}$ follows by looking at the action of the central character which is the action of $- \text{Id} \in \SO{(2n)} \text{ or } \Sp{(2n)}$ on any irreducible representation.
By Schur's Lemma, $- \text{Id}$ acts by $\pm 1$ on any irreducible representation of $\SO{(2n, \C)}$ or $\Sp{(2n, \C)}$, hence also on their tensor product. The conclusion on the parity of $|\nubar|$ with $\nlmn (n) \neq 0$ easily follows.

For the groups $\SO{(2n+1)}$ we don't have central character but surprisingly we have parity relation for the values of $n\geq n_0$. We prove the parity result for $\SO{(2n+1)}$ by Theorem~\ref{pieri thm} (Pieri's rule) and Corollary~(\ref{prod decom}). 

Let $\mubar$ is a positive integer $k$ then $n_0=\llam +1$. By Theorem~\ref{pieri thm}, any two highest weights $\nubar$ and $\nubard$ appearing in the tensor product satisfy the equations $|\lambar| + |\nubar| - 2 |\xibar|=k$ and
$|\lambar| + |\nubard| - 2 |\xibar^{'}|=k$ for some $\xibar$, $\xibar^{'}$ respectively.
Hence \begin{equation*}
    |\nubar| - |\nubard|=2(|\xibar|-|\xibar^{'}|).
\end{equation*}
It means any $|\nubar|$ and $|\nubar^{'}|$ have same parity. Now
$\lambar + \mubar=(\lambda_1 + \mu_1, \lambda_2 + \mu_2, \ldots, \lambda_n + \mu_n)$ always appears in the tensor product $\pilam^n \otimes \pimu^n$, see Ex.7, section 21 of \cite{JEH} .
So $|\nubar| \equiv |\lambda| + |\mubar|$ is true for $\lmu=1$ case.

For a general $\mubar$, by the Corollary~(\ref{prod decom}),
$$\pilam^n \otimes \pimu^n \subseteq \pilam^n \otimes ( \Pi_{\mu_1}^n \otimes \cdots \otimes \Pi_{\mu_n}^n ).$$
Now by repeated application of the Pieri's case (Theorem~\ref{pieri thm}), the later tensor product contains all the same parity weights $\nubar$, hence  $\pilam^n \otimes \pimu^n$ also contains same parity highest weights. As $\lambar + \mubar$ appears in tensor product so
$$|\nubar| \equiv |\lambda| + |\mubar| \mod 2 \quad  \text{ for }  \quad n\geq n_0.$$

\vspace{1mm}
{\bf Proof of (2)}
Let $\lambar =\{\lambda_1 \geq \cdots \geq \lambda_r \geq 0 \}$ be a sequence of integers and $n \geq r$. Define a set $C(\lambar,n)$ of sequences $\xibar=\{\xi_1 \geq \cdots \geq \xi_n \geq 0 \}$ such that  
$$\xibar \in C(\lambar,n) \quad \text{ if and only if } \quad \lambda_1 \geq \xi_1  \geq \cdots \geq \lambda_{n} \geq \xi_{n} \geq 0. 
$$
Observe that if $\lambda_{n+1}=0$, then 
 $$ \xibar \in C(\lambar,n)  \iff  \xibar \in  C(\lambar,n+1).$$
 
\noindent {\bf Symplectic group }: We prove
\begin{equation}\label{eq sym}
    \nlmn (n_0-1) = \nlmn (n_0) \quad \text{for} \quad l(\nubar)\leq n_0-1,
\end{equation}
 by a similar induction as  used in Theorem~\ref{thm stability}. So it is enough to prove 
 \begin{equation}\label{sym mu one}
     \nlkn (m) = \nlkn (m+1) \quad \text{for} \quad l(\nubar)\leq m,
 \end{equation}
where $m=\llam$, assuming $\mubar =k$, a positive integer in equation~(\ref{eq sym}). By Pieri's rule (Lemma~\ref{Pieri Sp}),
$$ \nlkn (m)=\# \bigl\{\xibar= \{ \xi_1 \geq  \cdots \geq \xi_{m} \geq 0 \} \text{ satisfies condition~(\ref{sym con}) below} \bigr\}$$
\begin{equation}\label{sym con}
\left\{  \begin{aligned}
         & \text{(i) } \xibar \in  C(\lambar,m), \\ \vspace{.5mm}
  & \text{(ii) } \xibar \in  C(\nubar,m), \\ \vspace{.5mm}
  & \text{(iii) } \sum_{i=1}^m (\lambda_i + \nu_i -2 \xi_i) =k. 
    \end{aligned}\right.
\end{equation}
Notice that 
\begin{align} 
    \xibar \in  C(\lambar,m) \quad & \iff \quad \xibar \in  C(\lambar,m+1) && \text{ as } \llam =m, \nonumber \\[0.7ex] 
    \xibar \in  C(\nubar,m) \quad & \iff \quad \xibar \in  C(\nubar,m+1) && \text{ as } l(\nubar)\leq m. \nonumber 
\end{align} 
Since $m=\llam$, $\xibar \in  C(\lambar,m)$ implies $\xi_{m+1}=0$. Hence we have
\begin{align*}
    \sum_{i=1}^m (\lambda_i + \nu_i -2 \xi_i) =k
     \quad & \iff \quad \sum_{i=1}^{m+1} (\lambda_i + \nu_i -2 \xi_i) =k   
\end{align*}
as $ \lambda_{m+1}=\nu_{m+1}=\xi_{m+1}=0$. So 
$$\nlkn (m) = \nlkn (m+1).$$
Hence our theorem for symplectic group  follows.
\vspace{.5cm}

\noindent{\bf Odd Orthogonal Group }: Similar induction arguments works also for odd orthogonal group, so it is enough to prove
\begin{equation*}
\nlkn(m) =
\begin{cases}
\nlkn \left (m+1 \right)  & \text{ if } d_{\nubar} \equiv 0 \mod 2
     \vspace{.1cm}\\
     \nlknone \left (m+1 \right)  & \text{ if } d_{\nubar} \equiv 1 \mod 2  \vspace{.15cm}
\end{cases}    
\end{equation*}
for $ l(\nubar)\leq m$,
where $m=\llam$. 
Define the set $N_m(\nubar)$ by
$$N_m(\nubar) =  \{\xibar =( \xi_1 \geq \cdots \geq \xi_{m} \geq 0 ) \quad | \quad \xibar \text{ satisfies condition~(\ref{odd con}) below}\}.$$
\begin{equation}\label{odd con}
\left\{
    \begin{aligned}
   & \text{(i) } \xibar \in  C(\lambar,m), \\ \vspace{.5mm}
  & \text{(ii) } \xibar \in  C(\nubar,m), \\ \vspace{.5mm}
  & \text{(iii) } \sum_{i=1}^m (\lambda_i + \nu_i -2 \xi_i) =k \text{ or } k-1, \\
  & \text{(iv) } \text{If } \sum_{i=1}^m (\lambda_i + \nu_i -2 \xi_i) = k-1, \text{ then } \llam=l(\xibar)=m.
    \end{aligned}\right.
\end{equation}
By Pieri (Lemma~\ref{pieri odd}), $\nlkn (m) = \# N_m(\nubar)$. Since $\llam = m$, both the case $k$ and $k-1$ happen in (\ref{odd con}) and we can write the set $N_m(\nubar)$ as a  disjoint union of two sets 
\begin{equation}\label{disjoint union}
    N_m(\nubar)= N_m^{'}(\nubar) \sqcup N_m^{''}(\nubar)
\end{equation}
where $N_m^{'}(\nubar)$ and $N_m^{''}(\nubar)$ are defined by as follows. 
$$N_m^{'}(\nubar) = \{\xibar =( \xi_1 \geq \cdots \geq \xi_{m} \geq 0 ) \quad | \quad \xibar \text{ satisfies condition~(\ref{odd con a}) below} \}$$
\begin{equation}\label{odd con a}
\left\{
    \begin{aligned}
        & \text{(i) } \xibar \in  C(\lambar,m) \\ \vspace{.5mm}
  & \text{(ii) } \xibar \in  C(\nubar,m) \\ \vspace{.5mm}
  & \text{(iii) } \sum_{i=1}^m (\lambda_i + \nu_i -2 \xi_i) =k, 
    \end{aligned} \right.
\end{equation}
$$  N_m^{''}(\nubar) = \{\xibar =( \xi_1 \geq \cdots \geq \xi_{m} \geq 1 ) \quad | \quad \xibar \text{ satisfies condition~(\ref{odd con b}) below} \}$$
\begin{equation}\label{odd con b}
\left\{
    \begin{aligned}
         & \text{(i) } \xibar \in  C(\lambar,m) \\ \vspace{.5mm}
  & \text{(ii) } \xibar \in  C(\nubar,m) \\ \vspace{.5mm}
  & \text{(iii) } \sum_{i=1}^m (\lambda_i + \nu_i -2 \xi_i) = k-1
    \end{aligned} \right.
\end{equation}
We have $$ N_m^{'}(\nubar)=  N_{m+1}(\nubar), $$
by the same argument that we used to prove the equation~(\ref{sym mu one}) for the symplectic group. Again 
$$  N_m^{''}(\nubar)=  N_{m+1}(\nubar^{(1)}), $$
the equality is due to both (\ref{length odd}) and (\ref{nu equal odd}) happen.
\begin{equation}\label{length odd}
    \sum_{i=1}^m (\lambda_i + \nu_i -2 \xi_i)  =k-1 \quad \iff \quad
\sum_{i=1}^{m+1} (\lambda_i + \nu^{(1)}_i -2 \xi_i)  =k
\end{equation}
\begin{equation}\label{nu equal odd}
    \{\xibar \in  C(\nubar,m), \xi_{m}\geq 1\} \quad \iff \quad \{ \xibar \in  C(\nubar^{(1)},m+1), \xi_{m+1}=0 \}
\end{equation}
(\ref{length odd}) holds because $\sum_{i=1}^{m+1} \nu^{(1)}_{i}= \sum_{i=1}^{m}\nu_{i}+1$ as $\nubar^{(1)}=(\nu_1,\ldots,\nu_{m},1)$ and $\lambda_{m+1}=\xi_{m+1}=0$ as $m=\llam$. (\ref{nu equal odd}) trivially holds. 

By Pieri's Theorem~\ref{pieri thm},
\begin{equation}\label{disjoint sum}
    \nlkn (m+1) = \# N_{m+1}(\nubar) \quad \text{and} \quad \nlknone (m+1)= \# N_{m+1}(\nubar^{(1)}). 
\end{equation}
Combining (\ref{disjoint union}) and (\ref{disjoint sum}), we have 
 $$\nlkn (m) \hspace{1mm} = \hspace{1mm} \nlkn (m+1) \hspace{1mm} +  \hspace{1mm} \nlknone (m+1).$$
   
 Using part~(1) of the Theorem~\ref{thm before stable},
 \begin{align*}
    & \nlkn (m+1) \neq 0 && \text{ if } |\nubar| \equiv |\lambar| + |\mubar| \mod 2 \\
   & \nlknone (m+1) \neq 0 && \text{ if } |\nubar^{(1)}| \equiv |\lambar| + |\mubar| \mod 2,
 \end{align*}
  hence the product
  $$\nlkn \left (n_0 \right) \cdot \nlknone \left (n_0 \right) =0 $$
as $|\nubar|$ and $|\nubar^{(1)}|$ do not have the same parity. So our theorem is true for odd orthogonal groups.

\vspace{1mm}
{\bf Proof of (3)}
As $\llam, \lmu < n_0$, it is obvious that the outer automorphism of $\SO{(2n_0)}$ given by conjugating by an element of $\mathrm{O(2n_0)}$ not in $\SO{(2n_0)}$ preserves $\pilam^{n_0}$, $\pimu^{n_0}$ hence also the tensor product $\pilam^{n_0} \otimes \pimu^{n_0}$. Therefore,
$$\nlmno \left (n_0 \right)= \nlmn \left (n_0 \right)$$
and part~(a) follows.

For part~(b) again a similar induction argument works as for the other two groups. So it is enough to prove 
$$\nlkn(m) = \nlkn(m+1) \quad \text{for} \quad l(\nubar)\leq m
.$$
Define two set $N_m(\nubar)$ and $N_{m+1}(\nubar)$ as follows.
$$ N_m(\nubar) =  \{\xibar =( \xi_1 \geq \cdots \geq \xi_{m-1} \geq |\xi_{m}| ) \quad | \quad \xibar \text{ satisfies condition~(\ref{cond even}) below} \}$$
 \begin{equation}\label{cond even}
  \left\{
      \begin{aligned}
          & \text{(i) }\lambda_{1} \geq \xi_{1} \geq \lambda_{2} \geq \xi_{2} \geq \cdots \geq \lambda_{m-1} \geq  \xi_{m-1} \geq 0 \geq \xi_{m} \\ \vspace{.5mm}
  & \text{(ii) } \nu_{1} \geq \xi_{1} \geq \nu_{2} \geq \xi_{2} \geq \cdots \geq \nu_{m-1} \geq \xi_{m-1} \geq \nu_{m} \geq \xi_{m} \\ \vspace{.5mm}
  & \text{(iii) } \sum_{i=1}^m (\lambda_i + \nu_i -2 \xi_i) =k \\
  & \text{(iv) } \xi_{m} \in\left\{ 0, \nu_{m}\right\}. 
      \end{aligned}\right.
  \end{equation}
$$ N_{m+1}(\nubar) =  \{\xibar =( \xi_1 \geq \cdots \geq \xi_{m} \geq |\xi_{m+1}| ) \quad | \quad \xibar \text{ satisfies condition~(\ref{cond even b}) below} \}$$
  \begin{equation}\label{cond even b}
  \left\{
      \begin{aligned}
           & \text{(i) }\lambda_{1} \geq \xi_{1} \geq \lambda_{2} \geq \xi_{2} \geq \cdots \geq \lambda_{m-1} \geq  \xi_{m-1} \geq 0 \geq \xi_{m} \geq 0 \geq \xi_{m+1} \\ \vspace{.5mm}
  & \text{(ii) } \nu_{1} \geq \xi_{1} \geq \nu_{2} \geq \xi_{2} \geq \cdots \geq \nu_{m-1} \geq \xi_{m-1} \geq \nu_{m} \geq \xi_{m}\geq 0 \geq \xi_{m+1} \\ \vspace{.5mm}
  & \text{(iii) } \sum_{i=1}^{m+1} (\lambda_i + \nu_i -2 \xi_i) =k \\
  & \text{(iv) } \xi_{m+1}=0. 
      \end{aligned} \right.
  \end{equation}
By Lemma~\ref{pieri even},
\begin{align*}
    \nlkn(m)= \# N_m(\nubar) \quad \text{ and } \quad \nlkn(m+1)= \# N_{m+1}(\nubar). 
\end{align*}
So it is now enough to show
$$N_m(\nubar) = N_{m+1}(\nubar).$$
If $\xi_m=0$, (\ref{cond even}) and (\ref{cond even b}) are identical. If $\xi_m < 0$, then $\nu_m=\xi_m <0$ by (iv) of (\ref{cond even}), which contradicts our assumption $\nu_m\geq 0$. So $\xi_m=0$. Therefore, $$ N_m(\nubar) = N_{m+1}(\nubar).$$
Hence our thoerem follows.\qed

\section{Examples}
In this section, we give decomposition of $\pi_{\lambar} \otimes \pi_{\mubar}$ for $\lambar=(2,1,1,0,0,\ldots)$, $\mubar=(1,1,0,0,0,\ldots)$ on appropriate classical groups. All these calculation were done on \textit{Lie Software} to verify various part of the Theorem~\ref{thm before stable}.

\vspace{.5cm}
\begingroup
\setlength{\tabcolsep}{20pt} 
\renewcommand{\arraystretch}{2} 
\begin{tabular}{ | m{3em} | m{5cm}| m{5.5cm} | }
 \hline
 Groups & \centering $\Pi_{\lambar} \otimes \Pi_{\mubar}$ &  $\Pi_{\lambar} \otimes \Pi_{\mubar}$ as sum of irreducible representations \\
 \hline
 $\Sp{(6)}$ & (2,1,1) $\otimes$ (1,1,0) & \setlength{\baselineskip}{17pt} (2,2,2) + (1,1,0) + (2,2,0) + (2,1,1) + (3,2,1) + (2,0,0) + (3,1,0) \\ \hline
 $\Sp{(8)}$  & (2,1,1,0) $\otimes$  (1,1,0,0) & \setlength{\baselineskip}{17pt}
 (1,1,1,1) + (2,2,2,0) + (1,1,0,0) + (2,2,1,1) + (2,2,0,0)  + (3,2,1,0) + (2,0,0,0) + (3,1,1,1) + (3,1,0,0) + 2$\times$(2,1,1,0)\\
\hline
 $\Sp{(10)}$ & (2,1,1,0,0) $\otimes$ (1,1,0,0,0) & \setlength{\baselineskip}{17pt}
(1,1,1,1,0) + (2,2,2,0,0) + (1,1,0,0,0) + (2,2,1,1,0) + (2,2,0,0,0) + (2,1,1,1,1)  + (3,2,1,0,0) + (2,0,0,0,0) + (3,1,1,1,0) + (3,1,0,0,0) + 2$\times$(2,1,1,0,0)\\
 \hline
\end{tabular}
\captionof{table}{Decomposition for \Sp{(2n)}}\label{table sp}
\endgroup

\vspace{1cm}

\begin{remark}
We make the following observations for parity relation from these tables.
\begin{enumerate}
    \item Here $|\lambar| + |\nubar|=4+2=6$. $|\nubar|$ remains always even for all $\nubar$ appearing in the tensor products for $\Sp{(6)}$, $\Sp{(8)}$ and $\Sp{(10)}$ in Table~\ref{table sp}. Similarly it holds in Table~\ref{table even}  for $\SO(2n)$. 
    
  \item
  $|\nubar|$ is even for all $\nubar$ appearing in the tensor product for $\SO{(11)}$ but it is not true for $\SO{(9)}$, in Table~\ref{table odd}. As an example the weights $(2,2,0,0)$ and $(2,1,1,1)$ appearing in tensor product for $\SO{(9)}$ have different parity.  
\end{enumerate}
This verifies Part~(\ref{one prelevel}) of Theorem~\ref{thm before stable}.
\end{remark}

\vspace{.5mm}
\begingroup
\setlength{\tabcolsep}{20pt} 
\renewcommand{\arraystretch}{2} 
\begin{tabular}{ | m{2em} | m{5.3cm}| m{5.8cm} | }
 \hline
 Groups & \centering $\Pi_{\lambar} \otimes \Pi_{\mubar}$ &  $\Pi_{\lambar} \otimes \Pi_{\mubar}$ as sum of irreducible representations \\
 \hline
 $\SO{(10)}$ & \centering (2,1,1,0,0) $\otimes$ (1,1,0,0,0) & \setlength{\baselineskip}{19pt}
(1,1,1,1,0) + (2,2,2,0,0) + (1,1,0,0,0) + (2,2,1,1,0) + (2,2,0,0,0) + (2,1,1,1,1)   + (3,2,1,0,0) + (2,0,0,0,0) + (3,1,1,1,0) + (3,1,0,0,0)  + (2,1,1,1,-1) \quad + \quad  2$\times$(2,1,1,0,0)\\
 \hline
 $\SO{(12)}$ & \centering (2,1,1,0,0,0) $\otimes$ (1,1,0,0,0,0) & \setlength{\baselineskip}{19pt} (1,1,1,1,0,0) + (2,2,2,0,0,0) + (1,1,0,0,0,0) + (2,2,1,1,0,0) + (2,2,0,0,0,0) + (2,1,1,1,1,0) + (3,2,1,0,0,0) + (2,0,0,0,0,0) + (3,1,1,1,0,0) + 
 (3,1,0,0,0,0) \hspace{.95mm}+ 2$\times$(2,1,1,0,0,0) \\
\hline
\end{tabular}
\captionof{table}{Decomposition for \SO{(2n)}}\label{table even}
\endgroup
\vspace{1cm} 
 

\begingroup
\setlength{\tabcolsep}{20pt} 
\renewcommand{\arraystretch}{2} 
\begin{tabular}{ | m{3em} | m{5cm}| m{5cm} | }
 \hline
 Groups & \centering $\Pi_{\lambar} \otimes \Pi_{\mubar}$ &  $\Pi_{\lambar} \otimes \Pi_{\mubar}$ as sum of irreducible representations \\
 \hline
 $\SO{(9)}$  & (2,1,1,0) $\otimes$  (1,1,0,0) & \setlength{\baselineskip}{17pt}
 (1,1,1,1) + (2,2,2,0) + (1,1,0,0) + (2,2,1,1) + (2,2,0,0) + (2,1,1,1) + (3,2,1,0) + (2,0,0,0) + (3,1,1,1) + (3,1,0,0) + 2$\times$(2,1,1,0) \\
\hline
 $\SO{(11)}$ & (2,1,1,0,0) $\otimes$ (1,1,0,0,0) & \setlength{\baselineskip}{17pt}
(1,1,1,1,0) + (2,2,2,0,0) + (1,1,0,0,0) + (2,2,1,1,0) + (2,2,0,0,0) + (2,1,1,1,1) + (3,2,1,0,0) + (2,0,0,0,0) + (3,1,1,1,0) + (3,1,0,0,0) + 2$\times$(2,1,1,0,0) \\
 \hline
\end{tabular}
\captionof{table}{Decomposition for \SO{(2n+1)}}\label{table odd} 
\endgroup

\vspace{1cm}

\begin{remark} The following observations verify Part~(\ref{two prelevel}) and (\ref{three prelevel}) of the Theorem~\ref{thm before stable}  for the classical groups when we take $\lambar=(2,1,1,0,0,\ldots)$ and $\mubar=(1,1,0,0,0,\ldots)$.
\begin{enumerate}
     \item Comparing the tensor products of $\Sp{(8)}$ and $\Sp{(10)}$, one can check $$\nlmn(4)=\nlmn(5)$$
      holds for all $\nubar$ with $l(\nubar)\leq 4$ in the Table~\ref{table sp}.
     
     \item All $|\nubar|$ are even except for the weight $(2,1,1,1)$ in the tensor product decomposition of $\SO{(9)}$ in Table~\ref{table odd}. Comparing the tensor product decomposition of $\SO{(9)}$ and $\SO{(11)}$, 
     $$\nlmn(4)=\nlmn(5) \text{ for all }  \nubar, \hspace{.5mm} \text{ except }  \nubar=(2,1,1,1)$$
     where $l(\nubar)\leq 4$. For the weight  $\nubar=(2,1,1,1)$ , we have
     $$N_{\lambar \hspace{.5mm} \mubar}^{(2,1,1,1)}(4)=N_{\lambar \hspace{.5mm} \mubar}^{(2,1,1,1,1)}(5)=1 .$$
     
     \item In Table~\ref{table even}, for $\nubar=(2,1,1,1,-1)$,
     $$N_{\lambar \hspace{.5mm} \mubar}^{(2,1,1,1,-1)}(5) = N_{\lambar \hspace{.5mm} \mubar}^{(2,1,1,1,1)}(5)=1 $$
     and for other $\nubar$,
     $$\nlmn(5)=\nlmn(6).$$
\end{enumerate}
\end{remark}

\vspace{1mm}
\noindent{\bf Acknowledgement:} This work is part of author's thesis written at IIT Bombay and the author thanks `IIT Bombay' for PhD fellowship. The author thanks Prof. Dipendra Prasad for suggesting this question, for numerous helpful discussions, and for spending a lot of time going through the paper and correcting mistakes. After the first version of this work was uploaded on the arXiv, I learnt in correspondence with Prof. Soichi Okada that parts of Theorem~\ref{main thm} are due to~\cite{KoTe}. We thank Prof. Okada for bringing this paper to our attention. We also thank Prof. Vinay Wagh for helping to perform the Lie software in local desktop.

\end{document}